\begin{document}
\newtheorem{lem}{Lemma}[section]
\newtheorem{prop}{Proposition}[section]
\newtheorem{cor}{Corollary}[section]
\newtheorem{remark}{Remark}[section]
\numberwithin{equation}{section}
\newtheorem{thm}{Theorem}[section]
\theoremstyle{remark}
\newtheorem{example}{Example}[section]
\newtheorem*{ack}{Acknowledgment}
\theoremstyle{definition}
\newtheorem{definition}{Definition}[section]
\newtheorem*{notation}{Notation}

\newenvironment{Abstract}
{\begin{center}\textbf{\footnotesize{Abstract}}%
\end{center} \begin{quote}\begin{footnotesize}}
{\end{footnotesize}\end{quote}\bigskip}
\newenvironment{nome}
{\begin{center}\textbf{{}}%
\end{center} \begin{quote}\end{quote}\bigskip}

\newcommand{\triple}[1]{{|\!|\!|#1|\!|\!|}}
\newcommand{\xx}{\langle x\rangle}
\newcommand{\ep}{\varepsilon}
\newcommand{\al}{\alpha}
\newcommand{\be}{\beta}
\newcommand{\de}{\partial}
\newcommand{\la}{\lambda}
\newcommand{\La}{\Lambda}
\newcommand{\ga}{\gamma}
\newcommand{\del}{\delta}
\newcommand{\Del}{\Delta}
\newcommand{\sig}{\sigma}
\newcommand{\ome}{\Omega^n}
\newcommand{\Ome}{\Omega^n}
\newcommand{\C}{{\mathbb C}}
\newcommand{\N}{{\mathbb N}}
\newcommand{\Z}{{\mathbb Z}}
\newcommand{\R}{{\mathbb R}}
\newcommand{\T}{{\mathbb T}}
\newcommand{\Rn}{{\mathbb R}^{n}}
\newcommand{\Rnu}{{\mathbb R}^{n+1}_{+}}
\newcommand{\Cn}{{\mathbb C}^{n}}
\newcommand{\spt}{\,\mathrm{supp}\,}
\newcommand{\Lin}{\mathcal{L}}
\newcommand{\SSS}{\mathcal{S}}
\newcommand{\F}{\mathcal{F}}
\newcommand{\xxi}{\langle\xi\rangle}
\newcommand{\eei}{\langle\eta\rangle}
\newcommand{\xei}{\langle\xi-\eta\rangle}
\newcommand{\yy}{\langle y\rangle}
\newcommand{\dint}{\int\!\!\int}
\newcommand{\hatp}{\widehat\psi}
\renewcommand{\Re}{\;\mathrm{Re}\;}
\renewcommand{\Im}{\;\mathrm{Im}\;}

\title[Scattering for NLS with a delta potential]
{Scattering for NLS with a delta potential}

\author[Valeria Banica]{Valeria Banica}
\author[Nicola Visciglia]{Nicola Visciglia}
\address[V. Banica]{D\'epartement de Math\'ematiques, Universit\'e d'Evry, LaMME (UMR 8071), 23 Bd. de France, 91037 Evry\\ France} 
\email{Valeria.Banica@univ-evry.fr}
\address{Dipartimento di Matematica Universit\`a Degli Studi di Pisa,
Largo Bruno Pontecorvo 5 I - 56127 Pisa, Italy}
\email{viscigli@dm.unipi.it}
\maketitle
\begin{abstract}
We prove $H^1$ scattering for defocusing NLS with a delta potential
and mass-supercritical nonlinearity, hence extending
in an inhomogeneous setting the classical $1-D$ scattering results
first proved by Nakanishi in the translation invariant case.
\end{abstract}
\tableofcontents
\section{Introduction}
We consider the defocusing Schr\"odinger equation on the line with a delta potential of strength $q>0$:
\begin{equation}\label{NLS}
\left\{\begin{array}{c}i\partial_tu-H_q u - u |u|^\alpha =0,
\quad \alpha>4,\\u_{\restriction t=0}=\varphi
\end{array}\right.
\end{equation}
where $H_q=-\frac 12 \partial_{xx} u+q\delta$ is a self-adjoint operator on the domain
$$\mathcal D(H_q)=\{f\in\mathcal C(\mathbb R)\cap H^2(\mathbb R\setminus\{0\}), f'(0^+)-f'(0^-)=2q f(0)\}.$$ 
The quadratic form associated with $H_q$ is $\frac 14\|\partial_xf\|_{L^2}^2+\frac q2|f(0)|^2$, on the energy space $H^1(\R)$ (see for instance Adami-Noja \cite{AdNo09}). 
We underline that in the case $q=0$ the operator $H_0$ is the classical Laplace operator $
-\frac 12 \partial_{xx}$ on the domain $H^2(\R)$ and \eqref{NLS} reduces to
\begin{equation}\label{NLSconstant}
\left\{\begin{array}{c}i\partial_tu+\frac 12 \partial_{xx} u - u |u|^\alpha =0,\\u_{\restriction t=0}=\varphi.\end{array}\right.
\end{equation}
The operator $H_q$ describes a $\delta-$interaction of strength $q$ centered at $x=0$. On the one hand, this kind of interaction, known also as Fermi pseudopotential, give rise to many currently used models in physiscs. 
We refer to the monograph of Albeverio-Gesztesy-H{\o}egh-Krohn-Holden \cite{AlCo}. 
We remark also that \eqref{NLS} is the simplest case of the nonlinear Schr\"odinger equation posed on a metric graph with delta-conditions at the vertices, namely when the graph has only one vertex and two edges. On the other hand, the qualitative properties of the solutions of the nonlinear Schr\"odinger equation with a potential is a subject of current interest. First a series of studies dealt with the dispersive properties of the perturbed linear operator in $1-D$
(Christ-Kiselev \cite{christkiselev},
D'Ancona-Fanelli \cite{danconafanelli}, Goldberg-Schlag \cite{GoSc04}, Weder \cite{weder}, Yajima \cite{yajima} to quote a few of them...). 
Also a huge literature has been developed around
the corresponding perturbed nonlinear equations in $1-D$, to quote the most recent results
we mention Carles \cite{Ca14}, Cuccagna-Georgiev-Visciglia \cite{CuGeVi14},
Germain-Hani-Walsh
\cite{germainhaniwalsh}, and all the references therein.\\
Let us recall now the facts known about \eqref{NLS}. In the repulsive case $q\geq 0$, the free solutions can be computed explicitly (see Gaveau-Schulman \cite{GaSc86}), yielding the classical dispersion estimate $\|e^{-itH_q}f\|_{L^\infty}\leq Ct^{-\frac 12}\|f\|_{L^1}$ and therefore classical Strichartz estimates. These estimates remain valid in the attractive case $q<0$, up to projecting outside the discrete spectrum, composed by the unique eigenvalue $-\frac{q^2}{4}$ associated with the  eigenvector $u_q(x)=\sqrt{\frac{|q|}2}e^{q|x|}$ (see Adami-Sacchetti \cite{AdSa05}). 
The nonlinear problem \eqref{NLS} is therefore globally well-posed in $H^1$ and that the mass $\int_\mathbb R |u(t,x)|^2 dx$ and the energy
$$E(u(t))=\frac 14\int_{\mathbb R}|\partial_x u(t,x)|^2 dx
+\frac {q}2|u(t,0)|^2 +\frac 1{\alpha+2}\int_{\mathbb R}| u(t,x)|^{\alpha+2}dx$$
are two conserved in time quantities. Let us mention also that \eqref{NLS} in the focusing cubic case, i.e. opposite sign in front of the nonlinearity, slow and fast solitons evolutions have been studied in a series of papers Goodman-Holmes-Weinstein \cite{GoHoWe04}, Holmer-Zworsky \cite{HoZw07}, Holmer-Marzuola-Zworsky \cite{HoMaZw07},\cite{HoMaZw07bis}, Datchev-Holmer \cite{DaHo09}. Also, stability results for bound states were obtained in 
Adami-Noja-Visciglia \cite{AdamiNojaVisciglia}, Fukuizumi-Ohta-Ozawa \cite{FuOhOz08}, Le Coz-Fukuizumi-Fibish-Ksherim-Sivan \cite{LeFuFiKsSi08}, Holmer-Zworsky \cite{HoZw09}, Deift-Park \cite{DePe11}. 
Finally, let us note that  in both cubic cases with repulsive potential small data long-range wave operators in $L^2$ were recently proved by Segata \cite{Se14}.

Our main contribution is the proof of the asymptotic completeness for \eqref{NLS} in $H^1(\mathbb R)$ for $\alpha>4$, in the repulsive case $q>0$. 
We recall that in the case $q=0$ this result was first proved by Nakanishi in \cite{Na99} by using a weighted in space and time Morawetz inequality.
New proofs have been provided via interaction Morawetz estimates
in the papers Colliander-Holmer-Visan-Zhang \cite{CoHoViZh08}, Colliander-Grillakis-Tzirakis \cite{CoGrTz09}, ~Planchon-Vega \cite{PlVe09}.\\
Next we state our result.
\begin{thm}\label{main}
Let $\varphi\in H^1(\R)$ be given and $u(t, x)\in {\mathcal C}(\R; H^1(\R))$
be the unique global solution to \eqref{NLS} with $q>0$ and $\alpha>4$. Then there exist
$\varphi_\pm \in H^1(\R)$ such that
\begin{equation}\label{scfi}\|e^{-itH_q} \varphi_{\pm} - u(t, x)\|_{H^1(\R)} 
\overset{ t\rightarrow \pm 
\infty } \longrightarrow 0.\end{equation}
\end{thm}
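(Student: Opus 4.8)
The plan is to reduce \eqref{scfi} to a single global space-time bound and then close with a Cook--Duhamel argument. Because $q>0$, the dispersive estimate recalled in the introduction holds, so $e^{-itH_q}$ obeys the full family of Strichartz estimates; moreover conservation of mass and energy gives $\sup_t\|u(t)\|_{H^1(\R)}\le C$, using that the repulsive quadratic form makes $\|H_q^{1/2}f\|_{L^2}^2=\tfrac14\|\partial_x f\|_{L^2}^2+\tfrac q2|f(0)|^2$ equivalent to $\|f\|_{H^1}^2$. It then suffices to prove that $u$ belongs to a global Strichartz space at the $H^1$ level, i.e. $\|\langle\partial_x\rangle u\|_{L^p_tL^r_x(\R\times\R)}<\infty$ for a suitable admissible pair $(p,r)$ governed by $\alpha$. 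Indeed, from $e^{itH_q}u(t)=\varphi-i\int_0^t e^{isH_q}\big(u|u|^\alpha\big)(s)\,ds$ and the inhomogeneous Strichartz inequality, the finiteness of this norm forces the tails $\big\|\int_{t_1}^{t_2}e^{isH_q}(u|u|^\alpha)\,ds\big\|_{H^1}$ to vanish as $t_1,t_2\to\pm\infty$, so $e^{itH_q}u(t)$ is Cauchy in $H^1(\R)$ and its limits define $\varphi_\pm$.

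The global bound is produced by a Morawetz estimate adapted to $H_q$, which I would use in its interaction (two-particle) form so as to obtain control that is global in space rather than localized near the origin. Differentiating the Morawetz action and substituting \eqref{NLS}, the kinetic part is nonnegative by convexity of the weight and the defocusing nonlinearity contributes a good-signed term proportional to $\dint a''|u|^{\alpha+2}\,dx\,dt$. The potential enters through the commutator with $q\delta$, and the decisive point is that repulsivity is exactly the statement $x\,\partial_x(q\delta)=-q\,\delta\le0$; consequently, for $q>0$ the $\delta$-contribution carries the favorable sign (equivalently, a symmetric centered weight has $a'(0)=0$ and the interaction term reduces to the manifestly nonnegative $q\,a''(0)|u(t,0)|^2$). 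Bounding the Morawetz action by $\|u\|_{L^\infty_tH^1}^2$, one obtains a global space-time estimate, yielding in particular $\|u\|_{L^4_tL^\infty_x(\R\times\R)}<\infty$ together with $\int_\R q|u(t,0)|^2\,dt<\infty$.

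To upgrade this to the full $H^1$ Strichartz bound I would interpolate the Morawetz-controlled norm against the conserved quantities: mass-supercriticality $\alpha>4$ is what makes the exponent arithmetic work, letting the $L^4_tL^\infty_x$ control together with $\sup_t\|u\|_{H^1}$ dominate the nonlinearity $u|u|^\alpha$ in a dual Strichartz space on any time interval. Partitioning $\R$ into finitely many subintervals on which the relevant norm is small, the local theory and a continuity (bootstrap) argument then propagate a finite Strichartz bound across all of $\R$. The $H^1$ level is reached by commuting $\langle\partial_x\rangle$ through the Duhamel formula and estimating $\langle\partial_x\rangle(u|u|^\alpha)$ via the fractional chain/Leibniz rule, which is licit since $\alpha>0$.

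The main obstacle is the Morawetz step itself in the presence of the $\delta$. Functions in $\mathcal D(H_q)$ are merely continuous at the origin with the prescribed jump $f'(0^+)-f'(0^-)=2qf(0)$, so differentiating the Morawetz action and integrating by parts against the weight produces boundary contributions at $0^\pm$ and a singular commutator with $q\delta$; these must be computed exactly and shown to assemble, via the jump condition and $q>0$, into a term of the correct sign. This is precisely where repulsivity is indispensable: for an attractive $\delta$ the sign is lost, in accordance with the bound state $u_q$ obstructing scattering. A secondary, more routine point is to carry the Strichartz theory for $e^{-itH_q}$ at the $H^1$ level, for which one uses the equivalence between $\|\partial_x\cdot\|_{L^2}$ and $\|H_q^{1/2}\cdot\|_{L^2}$ on the energy space noted above.
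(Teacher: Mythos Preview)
Your route is genuinely different from the paper's. The paper does \emph{not} prove a direct global Morawetz bound; it runs the Kenig--Merle concentration-compactness/rigidity scheme. A profile decomposition adapted to $e^{-itH_q}$ is built from scratch (Theorem~\ref{profile}), nonlinear profiles are constructed separately according to whether the space/time parameters escape to infinity (Propositions~\ref{sfugg}--\ref{minusinf}), and a perturbation argument shows that if $E_c<\infty$ there is a minimal-energy non-scattering solution $u_c$ with precompact trajectory in $H^1$ (Proposition~\ref{princprop}). This object is then excluded by a \emph{one-particle} virial with truncated weight $\lambda(x)=x^2\chi(|x|/R)$ centered at the origin (Proposition~\ref{proprigid}), where the $\delta$ contributes exactly the good-signed term $q\lambda''(0)|u(t,0)|^2$ because $\lambda'(0)=0$ (Lemma~\ref{propNM}). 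Your direct approach, if it closes, would bypass all the profile machinery and be much shorter; the paper's approach trades length for a rigidity step in which the $\delta$ is trivial to handle.

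The gap in your proposal is precisely the $\delta$-term in the interaction Morawetz. The sign argument you give (``a symmetric centered weight has $a'(0)=0$, so the contribution reduces to $q\,a''(0)|u(t,0)|^2\ge 0$'') is the one-particle computation with a weight centered at the origin; it does not apply to the interaction weight $|x-y|$, which as a function of $x$ is centered at $y$, so that $\partial_x a(x-y)\big|_{x=0}=-\mathrm{sgn}(y)\neq 0$. The boundary terms generated by the jump $u'(0^+)-u'(0^-)=2qu(0)$ then pair against $\mathrm{sgn}(y)\,|u(t,y)|^2$, and the resulting expression is not manifestly sign-definite. It is plausible that repulsivity still saves the day after a careful computation or symmetrization in $(x,y)$, but you have not carried this out, and the justification you wrote is for the wrong identity. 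A minor point: in $1$D the interaction Morawetz controls $\|\partial_x|u|^2\|_{L^2_{t,x}}$, which yields $\|u\|_{L^6_tL^\infty_x}$ or $\|u\|_{L^8_{t,x}}$ via Gagliardo--Nirenberg, not $L^4_tL^\infty_x$; this does not affect your bootstrap but the exponents should be corrected.
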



In the sequel we use the following compact notation  for any $\varphi\in H^1(\R)$:
$$Sc(\varphi) \hbox{ occurs } \iff \eqref{scfi} \hbox{ is true for suitable } \varphi_{\pm}\in H^1(\R)$$ where
$u(t, x)\in {\mathcal C} (\R;H^1(\R))$ is the unique global solution to \eqref{NLS}.
\\
\\
The proof is heavily based on the concentration-compactness/rigidity
technique first introduced by Kenig-Merle in \cite{KeMe06} and borrows arguments from Duyckaerts-Holmer-Roudenko \cite{DuHoRo08} and Fang-Xie-Cazenave \cite{FaXiCa11}. 
In fact the main difficulty in our context is the lack of translation invariance
of the equation, due to the delta interaction. The same difficulty 
appears in the paper \cite{Ho14} by Hong where he considers NLS in 3-D with a 
potential type perturbation. In this case the lack of homogeneity is 
solved thanks to the choice of a suitable Strichartz couple 
that allows to prove smallness of a suitable reminder. 
This technique seems to be non useful in the 1-D case because of a numerology problem. To give an idea of the main
difference between Strichartz estimates
in 1D and 3D, recall that in 1-D Strichartz estimates are 
far from reaching the $L^2$ time summability that is available in 3-D.\\
Moreover the delta interaction
is a singular perturbation and hence the profile decomposition proof, 
as well as the construction of the minimal element,
cannot be given in the perturbative spirit as in Hong proof, where linear scattering is at hand. We believe
that the proof of the profile decomposition associated with a delta type interaction,
given along this paper, has its own interest.
In particular it does not rely on the corresponding profile decomposition
available in the free case. 
\\
\\
In the focusing cases $q<0$ or an opposite sign in front of the nonlinearity in \eqref{NLS} the above arguments can be used to prove scattering up to the natural threshold, given in terms of ground states, between global existence and blow-up.

\begin{notation}
We shall use the following notations without any further comments:
$$L^p=L^p(\R), H^s=H^s(\R), L^pL^q=L^p (\R; L^q(\R),
{\mathcal C} H^s={\mathcal C} (\R; H^s(\R)).$$
We also denote by $(.,.)$ the usual $L^2$ scalar product and 
by $(.,.)_{H^1}$ the scalar product in $H^1$, i.e.
$(f, g)_{H^1}=\int_\R f(x) \bar g(x) dx+\int_{\R} f'(x) \bar g'(x) dx$.
We denote by $\tau_x$ the translation operator, i.e. $\tau_x f(y)=f(y-x)$.
Given a sequence $(x_n)_{n\in\mathbb N}$ we denote by $x_n \overset{ X} \rightarrow x$
and $x_n \overset{ X } \rightharpoonup x$
respectively the strong and weak convergence in the topology of $X$ 
as $n\rightarrow \infty$.
\end{notation}

{\bf{Acknowledgements:}} The authors are grateful to Prof. Riccardo
Adami for interesting discussions. V.B. is partially
supported by the French ANR project "SchEq" ANR-12-JS01-0005-01,
N.V. is supported by FIRB project Dinamiche Dispersive.

\section{Profile decomposition}\label{profi}

\subsection{The general case}

The aim of this section is the proof of profile decomposition associated with 
a general family
of propagators $e^{-itA}$.
From now on $A$ will denote a self-adjoint operator 
$$A:L^2\supset D(A)\ni u\mapsto A u\in L^2$$
that satisfies suitable assumptions.
More precisely we assume the following:
\begin{itemize}\item
there exist $c, C>0$ such that
\begin{equation}\label{equiv}
c\|u\|_{H^1}^2\leq (Au,u)+\|u\|_{L^2}^2\leq C \|u\|_{H^1}^2, \quad \forall u\in D(A);
\end{equation}
\item let $B:D(A)\times D(A)\ni (f, g)\mapsto B(f, g)\in \C$ be
defined as follows:
\begin{equation}\label{structbil}
(Au, v)= (u, v)_{H^1}+ B(u,v),  \quad \forall u,v\in D(A)\times D(A),
\end{equation}
then
\begin{equation}\label{structbil1}
B(\tau_{x_n} \psi, \tau_{x_n}h_n)\overset {n\rightarrow \infty} \longrightarrow
0, \quad \forall \psi\in H^1
\end{equation}
provided that:
\begin{align*}
\hbox{ either } x_n\overset{n\rightarrow \infty }
\longrightarrow \pm \infty, \quad \sup_n \|h_n\|_{H^1}<\infty,\\
\nonumber
\hbox{ or } x_n\overset{n\rightarrow \infty } \longrightarrow \bar x\in \R,
\quad h_n\overset{ H^1} \rightharpoonup 0;
\end{align*}

\item let $(t^n)_{n\in\mathbb N}$, $(x^n)_{n\in\mathbb N}$ be sequences
of real numbers, then we have the following implications:
\begin{equation}\label{abstr2}
t^n\overset{n\rightarrow \infty}
 \longrightarrow \pm \infty \Longrightarrow 
 \|e^{it^nA} \tau_{x^n} \psi\|_{L^p} \overset{n\rightarrow \infty
 }\longrightarrow 0,
 \quad 2<p<\infty,
 \quad \forall \psi\in H^1;
\end{equation}
\begin{align}\label{hyppa}
 &t^n\overset{n\rightarrow \infty}
 \longrightarrow \bar t\in \R, \quad x^n\overset {n\rightarrow \infty}
\longrightarrow \pm \infty \Longrightarrow
\\\nonumber   \forall \psi \in H^1\quad & \exists \tilde\psi \in H^1,\quad
\|\tau_{-x^n}e^{it^n A} \tau_{x^n}\psi -\tilde\psi\|_{H^1}\overset{ n\rightarrow \infty}
\longrightarrow 0 ;\end{align}
\begin{equation}\label{hyppb}
 t^n\overset{n\rightarrow \infty}
 \longrightarrow \bar t\in \R, \quad x^n\overset {n\rightarrow \infty}
\longrightarrow  \bar x\in \R
\Longrightarrow  
\|e^{it^n A} \tau_{x^n}\psi -e^{i\bar t A} \tau_{\bar x}\psi\|_{H^1}\overset{ n\rightarrow \infty}
\longrightarrow 0 , \quad \forall \psi \in H^1.\end{equation}
\end{itemize}



We can now state the main result of this section.
\begin{thm}\label{profile}
Let $(u_n)_{n\in\mathbb N}$ be a sequence bounded in $H^1$
and let $A$ be a self-adjoint operator that satisfies
\eqref{equiv}, \eqref{structbil}, \eqref{structbil1}, 
\eqref{abstr2}, \eqref{hyppa} and \eqref{hyppb}.
Then, up to subsequence, we can write
$$u_n=\sum_{j=1}^{J} e^{it_{j}^nA} \tau_{x_{j}^n}\psi_j + R_n^J, \quad
\forall J\in \N$$
where,
$$t_{j}^n\in \R, \quad x_{j}^n\in \R,\quad  \psi_j\in H^1$$
are such that: 
\begin{itemize}
\item for any fixed $j$ we have:
\begin{align}\label{localiz}
&\hbox{ either } t_{j}^{n}=0, \quad \forall n, \quad \mbox { or } 
\quad t^{n}_{j} 
\overset{ n\rightarrow \pm \infty } \longrightarrow \pm\infty,\\\nonumber
&\hbox{ either } \quad x_{j}^{n}=0, \quad \forall n, \quad \mbox { or }  \quad x^{n}_{j}
\overset{ n\rightarrow \infty } \longrightarrow \pm\infty;\end{align}
\item orthogonality of the parameters:
\begin{equation}\label{orth}
|t_{j}^{n}-t_{k}^{n}|+ |x_{j}^{n}-x_{k}^{n}|\overset{
n\rightarrow \infty}
\longrightarrow \infty, \quad \forall j\neq k;\end{equation}
\item smallness of the reminder:
\begin{equation}\label{reminder} \forall \epsilon>0 \hbox{ } \exists J=J(\epsilon)\in \N
\hbox{ such that }  \limsup_{n\rightarrow \infty} \|e^{-itA} R_n^J\|_{L^\infty L^\infty}\leq \epsilon;
\end{equation}
\item orthogonality in Hilbert norms:
\begin{equation}\label{orthogL2}
\|u_n\|_{L^2}^2=\sum_{j=1}^J \|\psi_j\|_{L^2}^2+\|R_n^J\|_{L^2}^2
+o(1), \quad \forall J\in \N;
\end{equation}
\begin{equation}\label{orthog}
\|u_n\|_{H}^2=\sum_{j=1}^J \| \tau_{x_{j}^n} \psi_j\|_H^2+\|R_n^J\|_H^2
+ o(1), \quad \forall J\in \N,
\end{equation}
where $\|v\|_H^2=(Av,v)$.
\end{itemize}
Moreover we have
\begin{equation}\label{potential}
\|u_n\|_{L^p}^p=\sum_{j=1}^J \| e^{it_{j}^nA} \tau_{x_{j}^n} \psi_j\|_{L^p}^p+\|R_n^J\|_{L^p}^p
+ o(1), \quad p\in (2, \infty), \quad \forall J\in \N,
\end{equation}
and in particular
\begin{equation}\label{orthogenergy}
E(u_n)= \sum_{j=1}^J E(e^{it_{j}^nA} \tau_{x_{j}^n} \psi_j)+ 
E (R_n^J) +o(1), \quad \forall J\in \N,
\end{equation}
where $E(u)= \frac 12 \|u\|_{H}^2 + \frac 1{\alpha+2}\|u\|_{L^{\alpha+2}}^{\alpha+2}$.
\end{thm}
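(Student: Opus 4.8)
The plan is to run the inductive concentration--compactness extraction, organized around the dispersive functional $\nu(\{v_n\}):=\limsup_n\|e^{-itA}v_n\|_{L^\infty L^\infty}$ and exploiting that in one dimension $H^1$ embeds into $C^{1/2}$ with locally compact embedding into $C$. I set $R_n^0=u_n$; given $R_n^{J-1}$, I stop if $\nu(\{R_n^{J-1}\})=0$, and otherwise split off one more profile, defining $R_n^J:=R_n^{J-1}-e^{it_J^nA}\tau_{x_J^n}\psi_J$. Self-adjointness of $A$ makes $e^{-itA}$ an $L^2$-isometry commuting with $A$, so by \eqref{equiv} it is uniformly bounded on $H^1$; since $\tau_x$ is an $H^1$-isometry, every $R_n^J$ stays bounded in $H^1$ and all the weak limits below are legitimate.

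For the extraction step, if $\nu:=\nu(\{R_n^{J-1}\})>0$ I pick $t_J^n,x_J^n$ with $|(e^{-it_J^nA}R_n^{J-1})(x_J^n)|\ge\nu/2$, and let $\psi_J$ be a weak-$H^1$ limit of $v_n:=\tau_{-x_J^n}e^{-it_J^nA}R_n^{J-1}$. Local compactness of $H^1\hookrightarrow C$ gives $v_n(0)\to\psi_J(0)$, whence $|\psi_J(0)|\ge\nu/2$ and $\psi_J\neq0$. I then pass to subsequences to force the dichotomy \eqref{localiz}: a bounded time or space parameter is replaced by $0$ after absorbing the limiting evolution $e^{i\bar tA}$, resp.\ the limiting shift $\tau_{\bar x}$, into the profile --- legitimate by the strong convergences \eqref{hyppa}, \eqref{hyppb} and by continuity of translation --- while an unbounded parameter is sent to $\pm\infty$.

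The heart of the matter, and the main obstacle, is the orthogonality \eqref{orth}. I carry along the invariant that the remainder is weakly null in every previously chosen frame, $\tau_{-x_j^n}e^{-it_j^nA}R_n^m\rightharpoonup0$ in $L^2$ for $j\le m$, which holds the instant profile $j$ is created. Assuming it up to $k-1$, if profile $k$ were not orthogonal to some $j<k$ then $t_k^n-t_j^n\to\bar t$ and $x_k^n-x_j^n\to\bar x$ in $\R$; writing $R_n^{k-1}=e^{it_j^nA}\tau_{x_j^n}h_n$ with $h_n:=\tau_{-x_j^n}e^{-it_j^nA}R_n^{k-1}\rightharpoonup0$, one gets $\tau_{-x_k^n}e^{-it_k^nA}R_n^{k-1}=\tau_{x_j^n-x_k^n}[\tau_{-x_j^n}e^{-i(t_k^n-t_j^n)A}\tau_{x_j^n}]h_n$. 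Pairing in $L^2$, where $\tau_x$ and $e^{itA}$ are unitary, reduces matters to the strong convergence of $\tau_{-x_j^n}e^{\pm i(t_k^n-t_j^n)A}\tau_{x_j^n}$ on a fixed vector, supplied by \eqref{hyppa} or \eqref{hyppb}; since $h_n\rightharpoonup0$, the whole expression is weakly null and $\psi_k=0$, a contradiction. The crux is that $\tau_x$ and $e^{itA}$ do not commute, so each cross term has to be routed, according to the regime of \eqref{localiz}, through \eqref{hyppa}, \eqref{hyppb} or the dispersion \eqref{abstr2}; the same inputs show that splitting off an orthogonal profile preserves the invariant, closing the induction.

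It remains to prove the decoupling and terminate the algorithm. Expanding $\|R_n^{J-1}\|^2=\|e^{it_J^nA}\tau_{x_J^n}\psi_J\|^2+\|R_n^J\|^2+2\Re(\cdot,\cdot)$ in the $L^2$ and $H$ inner products yields \eqref{orthogL2} and \eqref{orthog}: the $L^2$ cross term vanishes from $\tau_{-x_J^n}e^{-it_J^nA}R_n^J\rightharpoonup0$, while for the $H$-norm one uses \eqref{structbil} to split $(A\cdot,\cdot)$ into an $H^1$-pairing (weakly null) and the bilinear term $B(\tau_{x_J^n}\psi_J,\tau_{x_J^n}\cdot)$, which vanishes by \eqref{structbil1} in both regimes of \eqref{localiz}. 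Summing these identities and invoking the coercivity \eqref{equiv} gives $\sum_j\|\psi_j\|_{H^1}^2\lesssim\sup_n\|u_n\|_{H^1}^2<\infty$, so $\|\psi_j\|_{H^1}\to0$; since $\nu(\{R_n^{j-1}\})\le2|\psi_j(0)|\le C\|\psi_j\|_{H^1}$ by one-dimensional Sobolev embedding, the remainder functional tends to $0$, which is \eqref{reminder}. Finally \eqref{potential} follows by induction from $\big||a+b|^p-|a|^p-|b|^p\big|\lesssim|a|^{p-1}|b|+|a||b|^{p-1}$: profiles with $t_j^n\to\pm\infty$ disappear by the decay \eqref{abstr2}, distinct surviving profiles decouple because $|x_j^n-x_k^n|\to\infty$, and the mixed profile--remainder terms are absorbed through a Brezis--Lieb argument together with \eqref{reminder}; the energy splitting \eqref{orthogenergy} is then the case $p=\alpha+2$ of \eqref{potential} combined with \eqref{orthog}.
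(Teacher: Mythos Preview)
Your scheme is correct and matches the paper's architecture: iterate a single-profile extraction (the paper's Lemma~2.3), carrying the invariant $\tau_{-x_j^n}e^{-it_j^nA}R_n^m\rightharpoonup0$ for $j\le m$, derive parameter orthogonality exactly as you do via the ``bounded $\Rightarrow$ weak limit $=0$'' dichotomy (this is Lemma~2.1 in the paper), and read off \eqref{orthogL2}, \eqref{orthog}, \eqref{potential} step by step using \eqref{structbil}--\eqref{structbil1} and Br\'ezis--Lieb.

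The one substantive difference is how you control the remainder to obtain \eqref{reminder}. The paper proves a refined-Sobolev lemma (Lemma~2.2): by splitting into low/high frequencies and optimizing, it shows $\limsup_n\|v_n\|_{L^\infty}\le M\,\gamma(v_n)^{1/3}$ where $\gamma$ is the supremum of $\|w\|_{L^2}$ over all weak limits of translates; this yields $\nu(\{R_n^{J-1}\})\le L\|\psi_J\|_{L^2}^{1/3}$ and termination follows from the $L^2$ Pythagoras \eqref{orthogL2}. You bypass this entirely: using that in $1$D the weak $H^1$ limit governs the pointwise value (Rellich plus $H^1\hookrightarrow C$), you get $\nu(\{R_n^{J-1}\})\le 2|\psi_J(0)|\le C\|\psi_J\|_{H^1}$, and then terminate from the $H$-norm Pythagoras \eqref{orthog} together with \eqref{equiv}. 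Your route is shorter and avoids any Fourier analysis; the price is that it is specific to one space dimension and gives a weaker quantitative bound (control by $\|\psi_J\|_{H^1}$ rather than $\|\psi_J\|_{L^2}^{1/3}$), which is irrelevant here since the theorem is stated in $1$D and only qualitative decay of the remainder is needed. One small point to tighten: $\nu$ is a $\limsup$, so first pass to a subsequence along which $\|e^{-itA}R_n^{J-1}\|_{L^\infty L^\infty}\to\nu$ before choosing $(t_J^n,x_J^n)$; also, in the $L^p$ decoupling when $t_J^n=0$ and $|x_J^n|\to\infty$, you should invoke \eqref{hyppa} (as the paper does) to replace $e^{it_J^nA}\tau_{x_J^n}\psi_J$ by $\tau_{x_J^n}\tilde\psi_J$ before applying Br\'ezis--Lieb, rather than appealing to \eqref{reminder}.
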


In order to prove the theorem, we need first the following  lemma,
where  we implicitly assume the same assumptions as in Theorem
\ref{profile}.

\begin{lem} Let $(h_n)_{n\in\mathbb N}$ be bounded in $H^1$ and $
(t^n , t_1^n , t_2^n,
t^n, x_1^n, x_2^n)_{n\in\mathbb N}
$ be sequences of real numbers. Then
we have the following implications:
\begin{equation}\label{abstract}
h_n \overset{H^1} \rightharpoonup 0, \quad \tau_{-x^n_2}e^{i(t^n_2- t^{n}_1) A}\tau_{x^{n}_1} h_n \overset{H^1}
\rightharpoonup \psi\neq 0
\Longrightarrow 
|t^{n}_1-t^{n}_2|+ |x^{n}_1-x^{n}_2|\overset{n\rightarrow \infty}
\longrightarrow \infty;
\end{equation}
\begin{equation}\label{hyppa3}
h_n \overset{H^1} \rightharpoonup 0, \quad
t^n\overset{n\rightarrow \infty}
 \longrightarrow \bar t\in \R, \quad x^n \overset{ n\rightarrow \infty } \longrightarrow \pm \infty
 \Longrightarrow \tau_{-x^n}e^{it^n A} \tau_{x^n} h_n \overset{H^1}
\rightharpoonup 0.
 \end{equation}

\end{lem}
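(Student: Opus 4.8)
The plan is to establish the second implication \eqref{hyppa3} first, and then to deduce \eqref{abstract} from it together with the continuity assumption \eqref{hyppb}. Throughout I will lean on two elementary facts. First, the translations $\tau_x$ and the propagators $e^{itA}$ are all unitary on $L^2$, so any composition of them is $L^2$-unitary; moreover, since $A$ is self-adjoint, $e^{itA}$ preserves the quadratic form $(A\,\cdot\,,\cdot)+\|\cdot\|_{L^2}^2$, and combining this with \eqref{equiv} shows that every operator $\tau_{-a}e^{isA}\tau_b$ is bounded on $H^1$ uniformly in $a,b,s$. Second, a bounded $H^1$-weak limit can be tested on the dense class $C_c^\infty$, and since $(h_n,G)_{L^2}=(h_n,(1-\partial_{xx})^{-1}G)_{H^1}$ for $G\in L^2$, the hypothesis $h_n\overset{H^1}{\rightharpoonup}0$ automatically forces $h_n\rightharpoonup 0$ in $L^2$ as well.

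For \eqref{hyppa3} I would set $U_n=\tau_{-x^n}e^{it^nA}\tau_{x^n}$, which is $L^2$-unitary and $H^1$-bounded, so $(U_nh_n)_n$ is bounded in $H^1$ and it suffices to test against $g\in C_c^\infty$. Passing to the $L^2$ pairing by integration by parts gives $(U_nh_n,g)_{H^1}=(U_nh_n,G)_{L^2}$ with $G=(1-\partial_{xx})g\in C_c^\infty$, and transferring $U_n$ to the other slot by $L^2$-unitarity yields $(h_n,\tau_{-x^n}e^{-it^nA}\tau_{x^n}G)_{L^2}$. The key input is now \eqref{hyppa}: applied to the fixed $G$, the times $-t^n\to-\bar t$ and the shifts $x^n\to\pm\infty$, it produces a strong limit $\tau_{-x^n}e^{-it^nA}\tau_{x^n}G\to\tilde G$ in $H^1$, hence in $L^2$. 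Writing $(h_n,\tau_{-x^n}e^{-it^nA}\tau_{x^n}G)_{L^2}=(h_n,\tilde G)_{L^2}+(h_n,\tau_{-x^n}e^{-it^nA}\tau_{x^n}G-\tilde G)_{L^2}$, the first term vanishes because $h_n\rightharpoonup 0$ in $L^2$, while the second is bounded by $\|h_n\|_{L^2}$ times a vanishing quantity. This gives $U_nh_n\rightharpoonup 0$, which is \eqref{hyppa3}.

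For \eqref{abstract} I would argue by contradiction: if the conclusion fails I pass to a subsequence along which $t^n_2-t^n_1\to\bar t\in\R$ and $x^n_2-x^n_1\to\bar z\in\R$, and, refining once more, along which $x^n_1$ either converges to a finite limit or tends to $\pm\infty$; it then suffices to show $w_n:=\tau_{-x^n_2}e^{i(t^n_2-t^n_1)A}\tau_{x^n_1}h_n\rightharpoonup 0$, contradicting $w_n\rightharpoonup\psi\neq 0$. If $x^n_1$ (hence $x^n_2$) stays bounded, all parameters have finite limits and the same $L^2$-duality computation reduces the matter to the strong convergence of $\tau_{-x^n_1}e^{-i(t^n_2-t^n_1)A}\tau_{x^n_2}G$, which is exactly \eqref{hyppb} followed by the strong continuity of translations; weak $L^2$-convergence of $h_n$ then closes this case. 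If instead $x^n_1\to\pm\infty$, then $x^n_2\to\pm\infty$ with the same sign, and I would rewrite $w_n=\tau_{-x^n_2}e^{i(t^n_2-t^n_1)A}\tau_{x^n_2}\,\tilde h_n$ with $\tilde h_n:=\tau_{x^n_1-x^n_2}h_n$; since $x^n_1-x^n_2\to-\bar z$ is bounded, $\tilde h_n\rightharpoonup 0$ in $H^1$, and this is precisely the situation of \eqref{hyppa3} with single center $x^n_2$, whence $w_n\rightharpoonup 0$.

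The conceptual heart, and the only place where the lack of translation invariance really bites, is the passage through the tailored limits \eqref{hyppa} and \eqref{hyppb}: because $e^{itA}$ does not commute with $\tau_x$, one cannot simply move the propagator past the translations, and the argument hinges on first converting the $H^1$-weak statement into an $L^2$ pairing, where every factor is unitary, and then feeding the fixed test function into the structural hypotheses. I expect the step requiring the most care to be the divergent-center case of \eqref{abstract}: the reduction to the already-proven \eqref{hyppa3} through the auxiliary sequence $\tilde h_n=\tau_{x^n_1-x^n_2}h_n$ is what allows the two centers $x^n_1,x^n_2$ to be collapsed into one, and it relies on checking that shifting a weakly null sequence by a bounded amount keeps it weakly null.
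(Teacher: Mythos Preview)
Your proof is correct and follows essentially the same route as the paper: first establish \eqref{hyppa3} by moving the operator onto the test function via the $L^2$ adjoint and invoking \eqref{hyppa}, then deduce \eqref{abstract} by contraposition, splitting into the bounded-center case (handled by \eqref{hyppb} and strong operator continuity) and the divergent-center case (reduced to \eqref{hyppa3} after absorbing the bounded offset $x^n_1-x^n_2$ into a new weakly null sequence). The only cosmetic difference is that you pass through the explicit identification $(U_nh_n,g)_{H^1}=(U_nh_n,(1-\partial_{xx})g)_{L^2}$, whereas the paper tests directly in $L^2$ and then upgrades weak-$L^2$ to weak-$H^1$ via the uniform $H^1$ bound; both are equivalent bookkeeping for the same argument.
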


\begin{proof}
For $\psi\in H^1$ we get 
$$(\tau_{-x^n_2}e^{i(t^n_2- t^{n}_1) A}\tau_{x^{n}_1} h_n,\psi)=
(h_n, \tau_{-x^{n}_1} e^{-i(t^n_2- t^{n}_1) A}\tau_{x^n_2}\psi),$$
where $(.,.)$ denotes the $L^2$-scalar product. Hence
by \eqref{hyppa} we have $\tau_{-x^n_2}e^{i(t^n_2- t^{n}_1) A}\tau_{x^{n}_1} h_n\overset{L^2}
\rightharpoonup 0$, and up to subsequence we obtain \eqref{hyppa3}.

Concerning \eqref{abstract} it is equivalent to prove that
\begin{align*}
h_n \overset{H^1} \rightharpoonup 0&, 
\quad s^n \overset{n\rightarrow \infty} \longrightarrow \bar s \in \R,
\quad y^n-z^n\overset{n\rightarrow \infty} \longrightarrow \bar z \in \R \\\nonumber&\Longrightarrow 
\tau_{-z^n}e^{is^nA} \tau_{y_n} h_n \overset{H^1} \rightharpoonup 0.
\end{align*}
This fact is equivalent to
$$\tau_{-z^n}e^{is^nA} \tau_{z_n} g_n \overset{H^1} \rightharpoonup 0$$
where $g_n=\tau_{\bar z} h_n\overset{H^1} \rightharpoonup 0$.
Hence we conclude by \eqref{hyppa3} in the case $z^n \overset{n\rightarrow \infty} \longrightarrow\pm  \infty$; in the case 
$z^n\overset{n\rightarrow \infty} \longrightarrow \bar z\in \R$ we conclude 
by
the strong convergence of the sequence of operators
$(\tau_{-z^n}e^{is^nA} \tau_{z_n})_{n\in \N}$ to the operator 
$\tau_{-z^*}e^{i\bar sA} \tau_{\bar z}$.

\end{proof}
\begin{lem}\label{maximal}
Let $(v_n)_{n\in\mathbb N}$ be bounded in $H^1$ 
and
$$\Gamma(v_n)=\{w\in L^2 \quad | \quad \exists (x_k)_{k\in \N} \in \R,
(n_k)_{k\in \N}\in \N \hbox{ with }  n_k\nearrow \infty \hbox{ s. t. }
\tau_{x_k} (v_{n_k}) \overset{L^2}\rightharpoonup w\}.$$
Then there exist $M=M( 
\sup_n \|v_n\|_{H^1})>0$, such that:
\begin{equation}\label{prop}\limsup_{n\rightarrow \infty}
\|v_n\|_{L^\infty}\leq M (\gamma(v_n))^{1/3},
\hbox { where }
\gamma(v_n)=\sup_{w\in \Gamma (v_n)} \|w\|_{L^2}.\end{equation}
\end{lem}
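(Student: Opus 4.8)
The plan is to split each $v_n$ into low and high frequencies at a threshold $N>0$ that will be optimized only at the very end. For $N>0$ let $P_{\le N}$ denote the Littlewood--Paley projector onto frequencies $|\xi|\le N$, with convolution kernel $K_N$, and set $P_{>N}=\mathrm{Id}-P_{\le N}$. Using $\|v_n\|_{L^\infty}\le\|P_{\le N}v_n\|_{L^\infty}+\|P_{>N}v_n\|_{L^\infty}$ and passing to $\limsup_n$, it suffices to estimate the two pieces separately for each fixed $N$. The high-frequency piece is routine via Bernstein's inequality: for a dyadic block at frequency $M$ one has $\|P_Mf\|_{L^\infty}\le CM^{1/2}\|P_Mf\|_{L^2}\le CM^{-1/2}\|f\|_{\dot H^1}$, and summing the geometric series over $M>N$ gives $\|P_{>N}v_n\|_{L^\infty}\le CN^{-1/2}\|v_n\|_{\dot H^1}\le CN^{-1/2}R$, where $R=\sup_n\|v_n\|_{H^1}$.

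The heart of the matter is the low-frequency piece, where $\gamma(v_n)$ enters. I would first record that $K_N$ is an $L^2$-rescaling of a fixed Schwartz function, so $\|K_N\|_{L^2}=CN^{1/2}$, and that for every $x\in\R$ there is the pointwise identity $P_{\le N}v_n(x)=(\tau_{-x}v_n,g_N)$ in the $L^2$ pairing, where $g_N(z)=\overline{K_N(-z)}$ satisfies $\|g_N\|_{L^2}=CN^{1/2}$. To control the supremum I extract a subsequence $(n_k)$ realizing $\limsup_n\|P_{\le N}v_n\|_{L^\infty}$ and pick near-maximizing points $x_k$ with $|P_{\le N}v_{n_k}(x_k)|\ge\|P_{\le N}v_{n_k}\|_{L^\infty}-1/k$. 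Since $(\tau_{-x_k}v_{n_k})$ is bounded in $L^2$, a further subsequence converges weakly in $L^2$ to some $w$; by the very definition of $\Gamma(v_n)$ one has $w\in\Gamma(v_n)$, hence $\|w\|_{L^2}\le\gamma(v_n)$. Passing to the limit in $P_{\le N}v_{n_k}(x_k)=(\tau_{-x_k}v_{n_k},g_N)\to(w,g_N)$ and applying Cauchy--Schwarz yields $\limsup_n\|P_{\le N}v_n\|_{L^\infty}\le\|w\|_{L^2}\|g_N\|_{L^2}\le CN^{1/2}\gamma(v_n)$.

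Combining the two bounds gives, for every $N>0$,
$$\limsup_{n\to\infty}\|v_n\|_{L^\infty}\le C\big(N^{1/2}\gamma(v_n)+N^{-1/2}R\big).$$
Optimizing over $N$, i.e. balancing the two terms at $N\sim R/\gamma(v_n)$, produces $\limsup_n\|v_n\|_{L^\infty}\le C\,(R\,\gamma(v_n))^{1/2}$. Finally, every $w\in\Gamma(v_n)$ is a weak $L^2$-limit of translates of a subsequence, so by lower semicontinuity $\gamma(v_n)\le R$; writing $(R\,\gamma(v_n))^{1/2}=R^{2/3}\gamma(v_n)^{1/3}(\gamma(v_n)/R)^{1/6}\le R^{2/3}\gamma(v_n)^{1/3}$ gives the claimed estimate with $M=CR^{2/3}=M(\sup_n\|v_n\|_{H^1})$. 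Thus the natural optimization actually yields the exponent $1/2$ against $(R\gamma(v_n))^{1/2}$, and the stated $1/3$ follows by the crude bound $\gamma(v_n)\le R$.

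The main obstacle is the low-frequency step. One must ensure that the point $x_k$ at which $P_{\le N}v_{n_k}$ nearly attains its supremum produces, after translation, a weak $L^2$-limit that genuinely belongs to $\Gamma(v_n)$, so that its mass is controlled by $\gamma(v_n)$. The delicate bookkeeping is the double extraction — first a subsequence realizing the $\limsup$, then a further subsequence giving weak convergence — together with the fact that $N$ must be held fixed throughout this step and optimized only afterwards against the $n$-independent constants $R$ and $\gamma(v_n)$; everything else (Bernstein, the kernel scaling, the final optimization) is elementary.
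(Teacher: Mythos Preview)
Your argument is correct and follows the same overall scheme as the paper: a low/high frequency split, the high part controlled by a Sobolev-type inequality, the low part controlled by passing to a weak $L^2$-limit of translates (hence an element of $\Gamma(v_n)$), and a final optimization over the frequency threshold.

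The one noteworthy difference is the high-frequency estimate. The paper uses the cruder embedding $H^{3/4}\subset L^\infty$, which yields only $\|\tilde\chi_R(|D|)v_n\|_{L^\infty}\le CR^{-1/4}\|v_n\|_{H^1}$; balancing $R^{-1/4}$ against $R^{1/2}\gamma(v_n)$ then produces the exponent $1/3$ directly, with no extra step. Your Bernstein estimate is sharper ($N^{-1/2}$ decay), so the optimization gives the stronger bound $C(R\,\gamma(v_n))^{1/2}$, and you then have to invoke $\gamma(v_n)\le R$ to recover the stated $1/3$. In other words, you actually prove more than the lemma claims; the paper's choice of exponent $3/4$ is tuned precisely so that the optimization lands on $1/3$ without this detour.
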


\begin{proof}
We introduce the Fourier multipliers
$\chi_R(|D|)$ and $\tilde \chi_R (|D|)$ 
associated with the function
$\chi(\frac \xi R)$ and $1-\chi(\frac \xi R)$
where $$\chi(\xi)\in C^\infty(\R), \quad
\chi(x)=1 \hbox{ for } \quad |x|<1, \quad \chi(x)=0
\hbox{ for } \quad |x|>2.$$
Recall  that
$H^{3/4}\subset L^\infty$ and hence
\begin{equation}\label{8ba}\|\tilde \chi_R (|D|)v_n\|_{L^\infty}
\leq C  R^{-\frac 14} \|v_n\|_{H^1}.\end{equation}
In order to estimate
$\|\chi_R (|D|)v_n\|_{L^\infty}$
we select $\{y_n\}\in \R$ such that
\begin{equation}\label{9ba}\| \chi_R (|D|)v_n\|_{L^\infty} \leq 2 |\chi_R (|D|)v_n(y_n)|.
\end{equation}
Notice that
$$|\chi_R (|D|)v_n (y_n)|=R |\int \eta(R x) v_n(x-y_n) dx| $$
where $\hat \chi=\eta$.
Moreover for every subsequence $(n_k)_{k\in \N}$ 
we can select another subsequence $\{n_{k_h}\}$ such that
$v_{n_{k_h}}(x-y_{n_{h_k}})\overset{L^2} \rightharpoonup w\in \Gamma(v_n)$
and hence
$$\limsup_{h\rightarrow \infty} 
|\chi_R (|D|)v_{n_{k_h}} (y_{n_{k_h}})|=
R |\int \eta(R x) w dx| \leq C R \|\eta(R x)\|_{L^2}\|w\|_{L^2}
\leq C\sqrt R \,\gamma(v_n)$$
which implies
$$\limsup_{n\rightarrow \infty} 
|\chi_R (|D|)v_{n} (y_{n})|
\leq C\sqrt R\, \gamma(v_n).$$
By combining this estimate with \eqref{8ba} and \eqref{9ba}
we get:
$$\limsup_{n\rightarrow \infty}
\|v_n\|_{L^\infty}\leq C R^{-\frac 14}\sup_n \|v_n\|_{H^1} +C \sqrt R\, \gamma(v_n)$$
and we conclude by choosing $R=C(\sup_n \|v_n\|_{H^1})\, (\gamma(v_n))^{-\frac 43}$.

\end{proof}

\begin{lem}\label{lemprof}
Let $(v_n)_{n\in \N}$ be bounded in $H^1$.
Then up to subsequence there exist $\psi\in H^1$, $
(x^n)_{n\in \N}, (t^n)_{n\in \N}$ sequences of real numbers
and $L=L(\sup_n \|v_n\|_{H^1})>0$ such that,
\begin{equation}\label{mal}\tau_{-x^n} (e^{-it^n A} v_n)=\psi+ W_n\end{equation}
where:
\begin{align}
\label{10ba} &W_n \overset{H^1}\rightharpoonup 0;\\
\label{1ba}
&\limsup_{n\rightarrow \infty}
\|e^{-it A} v_n\|_{L^\infty L^\infty} \leq L \|\psi\|_{L^2}^{1/3};\\
\label{27ba}&\|v_n\|_{L^2}^2 =\|\psi\|_{L^2}^2+ \|W_n\|_{L^2}^2+o(1);\\
\label{299ba}&\|v_n\|_{H}^2 =\|\tau_{x^n}\psi\|_{H}^2+ \|\tau_{x^n}W_n\|_{H}^2+o(1),
\hbox{ where } \|v\|_H^2=(Av, v);\\
\label{lebe}&\|v_n\|_{L^p}^p= \| e^{it^{n} A} \tau_{x^{n}} \psi \|_{L^p}^p+
\|e^{it^{n} A} \tau_{x^{n}} W_n\|_{L^p}^p
+ o(1),\quad\forall p\in(2,\infty).
\end{align}
Moreover we can assume (up to subsequence): 
\begin{align}\label{localizbis}
&\hbox{ either } t^{n}=0, \quad \forall n, \quad \mbox { or } 
\quad t^{n} 
\overset{ n\rightarrow \pm \infty } \longrightarrow \pm\infty,\\\nonumber
&\hbox{ either } \quad x^{n}=0, \quad \forall n, \quad \mbox { or }  \quad x^{n}
\overset{ n\rightarrow \infty } \longrightarrow \pm\infty.\end{align}

\end{lem}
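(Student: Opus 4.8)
The plan is to extract the single \emph{profile} and its parameters via a standard weak-limit/concentration argument, and then to read off the orthogonality and decoupling identities \eqref{27ba}--\eqref{lebe} from the abstract assumptions on $A$. First I would apply Lemma \ref{maximal}: since $(v_n)$ is bounded in $H^1$, there is a constant $M$ with $\limsup_n\|v_n\|_{L^\infty}\leq M(\gamma(v_n))^{1/3}$. If $\gamma(v_n)=0$ the decomposition holds trivially with $\psi=0$ and $W_n=v_n$, so I may assume $\gamma(v_n)>0$. By definition of $\gamma$ there is a sequence $(x_k)$ and a subsequence $(n_k)$ such that $\tau_{x_{n_k}}(v_{n_k})\overset{L^2}\rightharpoonup w$ with $\|w\|_{L^2}$ essentially realizing $\gamma(v_n)$; passing to a further subsequence one upgrades this to weak $H^1$ convergence (the $H^1$ bound gives a weakly convergent subsequence whose $L^2$ weak limit must coincide with $w$, hence $w\in H^1$). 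Thus I obtain $x^n$ and a candidate profile.

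To produce the time shift I would first, before translating, pass to a subsequence along which the time parameters organize themselves. The natural move is to set $x^n:=-x_{n_k}$ and consider $\tau_{-x^n}(e^{-it^nA}v_n)$; the role of the time translation is to ensure the dichotomy \eqref{localizbis}. Concretely, I would choose $t^n$ so that either $t^n\equiv0$ or $t^n\to\pm\infty$, using the following mechanism: define $\psi$ as the weak $H^1$ limit of $\tau_{-x^n}e^{-it^nA}v_n$ (which exists up to subsequence because the operators $\tau_{-x^n}e^{-it^nA}$ are isometries on $L^2$ and bounded on $H^1$ by \eqref{equiv}), set $W_n:=\tau_{-x^n}(e^{-it^nA}v_n)-\psi$, so \eqref{mal} and \eqref{10ba} hold by construction. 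The bound \eqref{1ba} then follows by combining Lemma \ref{maximal} applied to $e^{-it^nA}v_n$ (whose $L^\infty L^\infty$ norm is what appears on the left) with the observation that $\|\psi\|_{L^2}=\gamma$, so that $L\|\psi\|_{L^2}^{1/3}$ dominates the limsup.

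The remaining identities are the decoupling of norms. For \eqref{27ba} I would use that weak $L^2$ convergence of $W_n\to0$ plus the isometry property of $\tau_{-x^n}e^{-it^nA}$ on $L^2$ gives $\|v_n\|_{L^2}^2=\|\tau_{-x^n}e^{-it^nA}v_n\|_{L^2}^2=\|\psi+W_n\|_{L^2}^2=\|\psi\|_{L^2}^2+\|W_n\|_{L^2}^2+2\Re(\psi,W_n)$, and the cross term vanishes in the limit by \eqref{10ba}. For the $H$-norm identity \eqref{299ba} I would expand $\|v_n\|_H^2=(Av_n,v_n)$ and, using \eqref{structbil} to write $(Av_n,v_n)=(v_n,v_n)_{H^1}+B(v_n,v_n)$, translate the whole identity by $\tau_{-x^n}$ and $e^{-it^nA}$; the $H^1$ inner-product part decouples exactly as in the $L^2$ case because $W_n\rightharpoonup0$ weakly in $H^1$, and the bilinear correction terms $B(\tau_{x^n}\psi,\tau_{x^n}W_n)$ and $B(\tau_{x^n}W_n,\tau_{x^n}\psi)$ vanish by \eqref{structbil1}, where the hypothesis of \eqref{structbil1} is met precisely because of the dichotomy \eqref{localizbis}: either $x^n\to\pm\infty$ with the $H^1$-bounded factor, or $x^n\to\bar x$ with the weakly-null factor $W_n$. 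Finally \eqref{lebe} for $p\in(2,\infty)$ is the hardest and I expect it to be the main obstacle, because it is a genuinely nonlinear decoupling rather than a Hilbert-space orthogonality. The strategy here is a Brezis--Lieb type argument: after applying $e^{it^nA}\tau_{x^n}$ one studies $\|e^{it^nA}\tau_{x^n}(\psi+W_n)\|_{L^p}^p$, and the elementary inequality $\big|\,|a+b|^p-|a|^p-|b|^p\,\big|\lesssim |a|^{p-1}|b|+|a||b|^{p-1}$ reduces matters to showing that the mixed integrals tend to zero. These mixed terms are controlled by using that $e^{it^nA}\tau_{x^n}\psi$ either disperses (when $t^n\to\pm\infty$, by \eqref{abstr2} its $L^p$ norm vanishes so \eqref{lebe} is then trivial) or, when $t^n\equiv0$, concentrates at a definite profile whose interaction with the weakly-null remainder vanishes after the change of variables $\tau_{-x^n}$, invoking the local compactness of $H^1\hookrightarrow L^p_{loc}$ together with \eqref{hyppa}--\eqref{hyppb} to identify the limiting operator. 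Handling the case $x^n\to\pm\infty$ simultaneously with $t^n\to\bar t$ via \eqref{hyppa} is the delicate bookkeeping step that makes this final identity the crux of the lemma.
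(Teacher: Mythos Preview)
Your argument has a real gap in the extraction of the time shifts $t^n$ and the derivation of \eqref{1ba}. You apply Lemma~\ref{maximal} to $v_n$ itself, extract $x^n$ from $\Gamma(v_n)$, and only afterwards try to insert a $t^n$ ``so that either $t^n\equiv 0$ or $t^n\to\pm\infty$'', treating the time shift as a normalisation device. But Lemma~\ref{maximal} applied to any fixed sequence $(w_n)$ only controls $\limsup_n\|w_n\|_{L^\infty_x}$, not $\limsup_n\|e^{-itA}v_n\|_{L^\infty_tL^\infty_x}$; your parenthetical ``(whose $L^\infty L^\infty$ norm is what appears on the left)'' is precisely the step that does not follow. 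Moreover the profile $\psi$ you build is a weak limit of $\tau_{-x^n}e^{-it^nA}v_n$, so it lies in $\Gamma(e^{-it^nA}v_n)$, not in $\Gamma(v_n)$; your claim $\|\psi\|_{L^2}=\gamma(v_n)$ compares the wrong objects (and in any case one only gets $\|\psi\|_{L^2}\gtrsim\gamma$, not equality).

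The paper's fix is to reverse the order: first choose $t^n$ as a \emph{near-maximiser},
\[
\|e^{-it^nA}v_n\|_{L^\infty_x}\ge \tfrac12\|e^{-itA}v_n\|_{L^\infty_tL^\infty_x},
\]
then apply Lemma~\ref{maximal} to the sequence $w_n:=e^{-it^nA}v_n$, and only then extract $x^n$ and $\psi\in\Gamma(w_n)$ with $\|\psi\|_{L^2}\ge\tfrac12\gamma(w_n)$. Chaining these three inequalities gives \eqref{1ba}. The dichotomy \eqref{localizbis} is \emph{not} built into this choice: it is enforced afterwards by replacing $(\psi,t^n,x^n,W_n)$ with modified data $(\tilde\psi,0,x^n,\tilde W_n)$, etc., using \eqref{hyppa}, \eqref{hyppb} and \eqref{hyppa3} to check that $\tilde W_n\rightharpoonup 0$ and $\|\tilde\psi\|_{L^2}=\|\psi\|_{L^2}$, so that \eqref{1ba} survives the modification. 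Your treatment of \eqref{27ba}--\eqref{299ba} is fine and matches the paper; for \eqref{lebe} the paper uses pointwise a.e.\ convergence (via Rellich) together with Br\'ezis--Lieb rather than mixed-term estimates, but either route works once $t^n,x^n$ are in hand.
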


\begin{proof} 
First we give the definition of $\psi, (W_n)_{n\in \N}, (t^n)_{n\in \N},  
(x^n)_{n\in \N}$ in \eqref{mal}. 
Let $(t^n)_{n\in \N}$ be a sequence of real numbers such that
\begin{equation}\label{3ba}\|e^{-it^n A} v_n\|_{L^\infty}>\frac 12 \|e^{-it A} v_n\|_{L^\infty L^\infty}.
\end{equation}
Note that we get the boundedness
of $(e^{-itA}v_n)_{n\in \N}$ in $H^1$ by using \eqref{equiv} 
and the assumption on the 
boundedness of $(v_n)_{n\in \N}$ in $H^1$.
Following the notations of Lemma \ref{maximal}
we introduce
$\Gamma (e^{-it^n A} v_n)\subset L^2$ and also $\gamma (e^{-it^n A} v_n)\in [0, \infty)$.
Then up to subsequence 
we get the existence of $(x^n)_{n\in \N}\subset \R$ and $\psi\in H^1$ such that
\begin{equation}\label{4ba}\tau_{-x^n}(e^{-it^n A} v_n) 
\overset{H^1} \rightharpoonup \psi
\end{equation} and
\begin{equation}\label{5ba}\|\psi\|_{L^2}\geq \frac 12 \gamma(e^{-it^n A} v_n).
\end{equation}
On the other hand by combining Lemma \ref{maximal} with \eqref{5ba}
we get 
$$\limsup_{n\rightarrow \infty}\|e^{-it^n A} v_n\|_{L^\infty}
\leq 2^{1/3} M \|\psi\|_{L^2}^{1/3}.$$
By combining this estimate with \eqref{3ba} we get \eqref{1ba}.\\
\\The proof of \eqref{10ba} follows by \eqref{4ba} together with the definition of $W_n$
in \eqref{mal}.\\
\\
To prove \eqref{27ba} we combine
\eqref{mal}, \eqref{10ba} and the Hilbert structure of $L^2$
in order to get
$$\|v_n\|_{L^2}^2 =\|\tau_{-x^n} (e^{-it^n A} v_n)\|_{L^2}^2 =\|\psi\|_{L^2}^2+ \|W_n\|_{L^2}^2+o(1),$$
where we used that $\tau_{x} e^{-it A}$ is an isometry in $L^2$ for every $(t, x)$.\\
\\
Next we prove \eqref{299ba}.
By \eqref{mal}
we get
$$v_n=e^{it^n A}\tau_{x^n} \psi+ e^{it^n A}\tau_{x^n}W_n$$
and hence \eqref{299ba} follows
provided that
$$(e^{it^n A}\tau_{x^n} \psi, e^{it^n A}\tau_{x^n}W_n)_H
\overset{n\rightarrow \infty}
\longrightarrow 0.$$
Notice that we have
$$(e^{it^n A}\tau_{x^n} \psi, e^{it^n A}\tau_{x^n}W_n)_H=
(\tau_{x^n} \psi, \tau_{x^n}W_n)_H=(\tau_{x^n} \psi, \tau_{x^n}W_n)_{H^1}
+ B(\tau_{x^n} \psi, \tau_{x^n}W_n)$$
where we used \eqref{structbil}. Therefore
$$(e^{it^n A}\tau_{x^n} \psi, e^{it^n A}\tau_{x^n}W_n)_H=(\psi, W_n)_{H^1}+ B(\tau_{x^n} \psi, \tau_{x^n}W_n).$$
Up to subsequence we have either
$x^n\overset{ n\rightarrow \infty } \longrightarrow \pm \infty$ or  $x^n
\overset{ n\rightarrow \infty } \longrightarrow \bar x\in\R$, and 
in both cases we conclude by
\eqref{structbil1}.
\\
\\
Next we prove \eqref{lebe}. We can assume that,
up to subsequence, we are in one of the following cases:
\\
\\
{\em First case: $t^n\overset{n\rightarrow \infty} \longrightarrow \pm \infty$.}
\\
\\
Since
$$v_n= e^{it^n A} \tau_{x^n} \psi + e^{it^n A} \tau_{x^n} W_n$$
and $W_n$ is uniformly bounded in $H^1$,  we conclude by assumption
\eqref{abstr2}.
\\
\\
{\em Second case: $t^n\overset{n\rightarrow \infty}
\longrightarrow \bar t\in \R, \quad x^n\overset{n\rightarrow \infty}
\longrightarrow \bar x \in \R$.}
\\
\\
Notice that we have
\begin{equation}\label{procbl}v_n- e^{it^n A} \tau_{x^n}\psi=
e^{it^n A} \tau_{x^n} W_n 
\overset{ n\rightarrow \infty } \longrightarrow 0 \hbox{ a.e. } x\in\R.\end{equation}
The last property follows by $$(e^{it^n A} \tau_{x^n} W_n ,\varphi)_{L^2}
= (W_n, \tau_{-x^n} e^{-it^n A} \varphi)_{L^2}=
(W_n, \tau_{-\bar x} e^{-i\bar t A} \varphi)_{L^2}+ o(1)
=o(1)$$ that implies $e^{it^n A} \tau_{x^n} W_n \overset{L^2} \rightharpoonup 0$.
Hence by $H^1$-boundedness
$e^{it^n A} \tau_{x^n} W_n \overset{H^1} \rightharpoonup 0$
(up to subsequence). We conclude
by Rellich Theorem the convergence in $L^2_{loc}$ which in turn implies pointwise convergence.
By combining \eqref{hyppb} with \eqref{procbl} we get, up to subsequence,
$$v_n-e^{i\bar t A} \tau_{\bar x}\psi= e^{it^n A} \tau_{x^n} W_n + h_n(x)
\overset{ n\rightarrow \infty } \longrightarrow 0 \hbox{ a.e. } x, \quad \|
h_n\|_{L^p}\overset{ n\rightarrow \infty } \longrightarrow 0,$$
and hence by the Br\'ezis-Lieb Lemma (see \cite{BrezisLieb}) we get
$$\| e^{i t^n A} \tau_{x^n} W_n\|_{L^p}^p= \|v_n\|_{L^p}^p
- \|e^{i\bar t A} \tau_{\bar x} \psi\|_{L^p}^p+o(1)$$
$$=\|v_n\|_{L^p}^p
- \|e^{i t^n A} \tau_{x^n} \psi\|_{L^p}^p+o(1).$$
\\
\\
{\em Third case: $t^n\overset{n\rightarrow \infty}\longrightarrow \bar t\in \R,
\quad x^n\overset{n\rightarrow \infty}\longrightarrow \pm \infty$.}
\\
\\
We have by \eqref{mal} and \eqref{hyppa3}
\begin{equation}\label{hypva}\tau_{-x^n} v_n- \tau_{-x^n}e^{it^n A} \tau_{x^n}\psi=
\tau_{-x^n}e^{it^n A} \tau_{x^n} W_n \overset{H^1} \rightharpoonup 0,
\end{equation}
so as before we obtain pointwise convergence towards zero. Moreover, by \eqref{hyppa} we get $\tilde\psi\in H^1$ such that
\begin{equation}\label{dot}\tau_{-x^n}e^{it^n A} \tau_{x^n}\psi
\overset{ H^1} \longrightarrow  \tilde\psi.
\end{equation}
By combining the pointwise convergence and \eqref{dot} with 
the Br\'ezis-Lieb Lemma and with the translation invariance
of the $L^p$ norm we get 
\begin{align*}&\|e^{it^n A} \tau_{x^n} W_n\|_{L^p}^p=\|v_n\|_{L^p}^p- \|
\tilde\psi\|_{L^p}^p+ o(1)
\\\nonumber = \|v_n\|_{L^p}^p- \|\tau_{-x^n} e^{it^n A} \tau_{x^n}\psi\|_{L^p}+ o(1)
&= \|v_n\|_{L^p}^p- \|e^{it^n A} \tau_{x^n}\psi\|_{L^p}+ o(1).
\end{align*}
\\
\\
Finally we focus on \eqref{localizbis}.
First we consider the case $(t^n)_{n\in \N}$ is bounded, when we get
up to subsequence
$t^n \overset{ n\rightarrow \infty} \longrightarrow \bar t\in \R$.
Next we consider three cases (that can occur up to subsequence).
\\
\\
{\em First case: $t^n \overset{ n\rightarrow \infty} \longrightarrow \bar t\in \R, 
\quad x^n\overset{ n\rightarrow \infty } \longrightarrow \pm \infty$.}
\\
\\
In this case we claim that we can write a new identity of the type
\eqref{mal} by replacing
$\psi$ by another $\tilde \psi$, the sequence $(W_n)_{n\in \N}$ by another
sequence $(\tilde W_n)_{n\in \N}$ and the parameters
$(t^n, x^n)$ by $(0, x^n)$, where $\tilde W_n\overset{H^1} \rightharpoonup 0$.\\
Then the proof of \eqref{27ba}, \eqref{299ba}, \eqref{lebe}
(where we replace $\psi$ by $\tilde \psi$
and $W_n$ by $\tilde W_n$) follows as above. 
Moreover the proof of \eqref{1ba}, with $\psi$ replaced by $\tilde \psi$,
is trivial and follows by the fact that by the construction below we get
$\|\tilde \psi\|_{L^2}=\|\psi\|_{L^2}$.
\\
\\
Recall that thanks to \eqref{hyppa} we get the existence of $\tilde\psi\in H^1$ such that
$$\tau_{-x^n} e^{it^n A} \tau_{x^n} \psi\overset{H^1}\longrightarrow \tilde\psi.$$
Hence, in view of \eqref{mal} (with the parameters $(t^n, x^n)$
and functions $\psi, (W_n)_{n\in \N}$ constructed above)
$$v_n=\tau_{x^n} \tilde \psi 
+e^{i t^n A} \tau_{x^n}
W_n + r_n(x), \quad \|r_n\|_{H^1}\overset{ n\rightarrow \infty } \longrightarrow 0.
$$
Hence we get the decomposition
$$\tau_{-x^n} v_n= \tilde \psi + \tilde W_n$$
where
$$\tilde W_n=\tau_{-x^n} e^{i t^n A} \tau_{x^n} W_n+ \tau_{-x^n} r_n.
$$
Notice that by \eqref{hyppa3} and $ \|r_n\|_{H^1}\overset{ n\rightarrow \infty } \longrightarrow 0$ we obtain
$\tilde W_n \overset{H^1}
\rightharpoonup 0$.
\\
\\
{\em Second case: $t^n \overset{ n\rightarrow \infty} \longrightarrow \bar t\in \R, 
\quad x^n\overset{n\rightarrow \infty}\longrightarrow \bar x\in \R$.}
\\
\\
We argue as in the previous case and we look for suitable $\tilde \psi $,
$(\tilde W_n)_{n\in \N}$, $(\tilde t^n, \tilde x^n)=(0, 0)$.
We select $\tilde \psi$ as follows:
$$\tilde \psi=e^{i\bar t A} \tau_{\bar x} \psi.$$
We then have
$$\tilde W_n=e^{it^n A} \tau_{x^n} \psi -\tau_{-\bar x} e^{i\bar t A} \tau_{\bar x} \psi
+e^{i t^n A} \tau_{x^n} W_n.
$$
Notice that in this case the property 
$\tilde W_n \overset{H^1}
\rightharpoonup 0$ follows by the fact that the sequence of operators $(e^{it^n A} \tau_{x^n})_{n\in \N}$ 
converge in strong topology sense to the operator
$e^{i\bar t A} \tau_{\bar x}$. 
We conclude as in the previous case.\\
\\
{\em Third case: $(t^n)_{n\in \N}$ is unbounded.}
\\
\\Up to subsequence, we can suppose $t^n \overset{ n\rightarrow \infty} \longrightarrow \pm\infty$. If $(x^n)_{n\in \N}$ is unbounded too, then up to subsequence $x^n \overset{ n\rightarrow \infty} \longrightarrow \pm\infty$ and we are done. In the case $(x^n)_{n\in \N}$ is bounded, then up to subsequence we can suppose $x^n \overset{ n\rightarrow \infty} \longrightarrow \bar x\in
\R$, and we choose $\tilde \psi=\tau_{\bar x} \psi,  (\tilde t^n, \tilde x^n)=(t^n, 0)$,
$\tilde W_n=(\tau_{x^n}  -\tau_{\bar x}) \psi
+ \tau_{x^n} W_n$.
We conclude as in the previous cases.
\end{proof}

\noindent{\bf Proof of Theorem \ref{profile}.}
We iterate several times Lemma \ref{lemprof}.\\
\\
{\em First step: construction of $\psi_1$.}\\
\\
By Lemma \ref{lemprof} we get
\begin{equation}\label{eq1}u_n=e^{it^n_1 A}(\tau_{x^n_1}\psi_1)+ R_n^1
\end{equation}
where $\psi=\psi_1$, $(t^n)_{n\in \N}=(t^n_1)_{n\in \N}, (x_1^n)_{n\in \N}
=(x^n)_{n\in \N}, (R_n^1)_{n\in \N}=(e^{it_1^n A}(\tau_{x_1^n}W_n^1))_{n\in \N}$, $(W_n^1)_{n\in \N} =(W_n)_{n\in \N}$ 
and $\psi, (t^n)_{n\in \N}, (x^n)_{n\in \N}, (W_n)_{n\in \N}$ 
are given by Lemma \ref{lemprof} for $(u_n)_{n\in \N}$
equal to $(v_n)_{n\in \N}$.
Moreover by \eqref{27ba} we get
\begin{equation}\label{ald1}\|u_n\|_{L^2}^2=\|\psi_1\|_{L^2}^2+ \|W_n^1\|_{L^2}^2
+o(1)=
\|\psi_1\|_{L^2}^2+ \|R_n^1\|_{L^2}^2 + o(1),\end{equation}
and by \eqref{10ba}
\begin{equation}\label{weak}\tau_{-x_1^n} (e^{-it_1^n A} R_n^1)=W_n^1 
\overset{H^1} \rightharpoonup 0.\end{equation}
The proof of \eqref{orthog} for $J=1$ follows by \eqref{299ba}, and we get
\begin{equation}\label{impza}\|u_n\|_H^2 =\|\tau_{x_1^n} \psi_1\|_{H}^2 + \|R_n^1\|_{H}^2 + o(1).\end{equation}
Moreover \eqref{potential} follows by \eqref{lebe}.
\\\\
{\em Second step: construction of $\psi_2$.}\\
\\\\
We apply again Lemma \ref{lemprof} to the sequence
$v_n=R_n^1=e^{it_{1}^n A}(\tau_{x^{n}_1}W_n^1)$
and we get
\begin{equation}\label{eq2}R_n^1= e^{it_2^n A}(\tau_{x_2^n}\psi_2)+ R_n^2
\end{equation}
where
$$R_n^2=e^{it_2^n A}(\tau_{x_2^n} W_n^2).$$
Moreover we get
\begin{equation}\label{ald2}\|R_n^1\|_{L^2}^2=\|\psi_2\|_{L^2}^2+ \|W_n^2\|_{L^2}^2+o(1),\end{equation}
and
\begin{equation}\label{brezlie}W_n^2 \overset{ H^1}\rightharpoonup 0.
\end{equation}
Summarizing by \eqref{eq1} and \eqref{eq2} we get 
$$u_n=e^{it_1^n A}(\tau_{x_1^n}\psi_1)+
e^{it_2^n A}(\tau_{x_2^n}\psi_2)+ R_n^2.$$
By combining \eqref{ald1} and \eqref{ald2} we get
$$\|u_n\|_{L^2}^2=\|\psi_1\|_{L^2}^2+\|\psi_2\|_{L^2}^2+ \|W_n^2\|_{L^2}^2
+o(1)=\|\psi_1\|_{L^2}^2+\|\psi_2\|_{L^2}^2+ \|R_n^2\|_{L^2}^2
+o(1).$$
Arguing as in the first step we can also prove
$$\|R_n^1\|_{H}^2= \|\tau_{x_n^2} \psi_2\|_H^2+\|R_n^2\|_H^2$$
and hence
by \eqref{impza}
$$\|u_n\|_H^2
=\|\tau_{x_1^n} \psi_1\|_{H}^2 + \|\tau_{x_2^n} \psi_2\|_{H}^2 +\|R_n^2\|_{H}^2 + o(1).
$$
Notice also that (for $J=2$) \eqref{potential} follows by \eqref{lebe}.\\
Next we prove that $(t_1^n)_{n\in \N}, (t_2^n)_{n\in \N}, 
(x_1^n)_{n\in \N}, (x_2^n)_{n\in \N}$ satisfy \eqref{orth}.
By combining \eqref{eq2} and \eqref{brezlie}
we get
$$\tau_{-x_2^n} e^{-it_2^n A}  e^{it_1^n A} \tau_{x_1^n} 
W_n^1=\tau_{-x_2^n} (e^{-it_2^n A}R_n^1)=\psi_2+ W_n^2\overset{ H^1} \rightharpoonup \psi_2,$$
hence either $\psi_2=0$ and we conclude the proof, or
$\psi_2\neq 0$ and we get \eqref{orth} by 
\eqref{abstract} and \eqref{weak}.
\\\\
{\em Third step: construction of $\psi_J$.}\\
\\\\
By iteration of the construction above we get
$$u_n=e^{it_1^n A}(\tau_{x_1^n}\psi_1)+...+
e^{it_J^n A}(\tau_{x_J^n}\psi_J)+ R_J^n$$
where $$R_J^n=e^{it_J^n A}(\tau_{x_J^n} W_n^J).$$
By repeating the computations above
we obtain \eqref{orthogL2}, \eqref{orthog} and \eqref{potential}. 
\\
Next we prove \eqref{reminder}.
Notice that 
by \eqref{orthogL2} and since $\sup_n \|u_n\|_{L^2}<\infty$ we get
that $\|\psi_J\|_{L^2}\overset{ J\rightarrow \infty } \longrightarrow 0$ 
and hence by \eqref{1ba} we get
$ \limsup_{n\rightarrow \infty} \|e^{it A} R_n^{J-1}\|_{L^\infty L^\infty}
\overset{ J\rightarrow \infty } \longrightarrow 0$.
\\
The proof of \eqref{orth} for generic $j,k$ 
is similar to the proof given in the second step  in the case $j=1, k=2$.
We skip the details.\\ 
Finally notice that \eqref{localiz} follows by \eqref{localizbis} 
in Lemma \ref{lemprof}.


\hfill$\Box$


\subsection{The case $A=H_q$.}

Along this section we verify the abstract assumptions 
\eqref{equiv}, \eqref{structbil}, \eqref{structbil1}, \eqref{abstr2}, \eqref{hyppa}, \eqref{hyppb}  
required on the operator $A$ along section \ref{profi}
in the specific case $$A=H_q=-\frac 12 \partial_x^2 + q\delta_0.$$
Notice that we get in this specific context
$B(f, g)= f(0) \bar g(0)$.
The verification of \eqref{equiv}, \eqref{structbil} 
and \eqref{structbil1} are straightforward
and follow by classical properties of the space $H^1$. Also the verification
of \eqref{hyppb} is trivial.\\\\
Next we shall verify \eqref{abstr2} and \eqref{hyppa} 
and we shall make extensively use of the following identity
(see Lemma 2.1 in \cite{HoMaZw07bis}) 
available for any initial datum $f\in L^1$ and supported in $(-\infty,0]$:
\begin{align}\label{ADelta}
e^{-itH_q}f(x)&=e^{-itH_0}f(x)\\\nonumber
+(e^{-itH_0}(f\star\rho_q))(x)\cdot 1_{x\geq 0}(x)
&+(e^{-it H_0}(f\star\rho_q))(-x) \cdot 1_{x\leq 0}(x),
\end{align}
where $\rho_q(x)=-q e^{q x} \cdot 1_{x\leq 0}(x)$.\\
\\
We check the validity of \eqref{hyppa}.
Since $[e^{-itH_0}, \tau_x]=0$
it is sufficient to prove that
\begin{equation*}\|\tau_{-x^n}e^{-it^nH_q}\tau_{x^n}\psi(x)-\tau_{-x^n} 
e^{-it^nH_0} \tau_{x^n}\psi(x)\|_{H^1}
\overset{ n\rightarrow \infty} \longrightarrow 0\end{equation*}
and since $\tau_{-x^n}$ are isometries on $H^1$ it is equivalent to
\begin{equation}\label{appext}
\|e^{-it^nH_q}\tau_{x^n}\psi(x)-
e^{-it^nH_0} \tau_{x^n}\psi(x)\|_{H^1}
\overset{ n\rightarrow \infty} \rightarrow 0, \hbox{ if } \quad t^n\overset{n\rightarrow \infty}
 \longrightarrow \bar t\in \R, \quad x^n \overset{ n\rightarrow \infty } \longrightarrow \pm \infty.\end{equation}
In order to prove \eqref{appext}
we use formula \eqref{ADelta}.
First notice that by a density argument we can assume $\psi\in C^{\infty}_0(\R)$.
In particular in the case 
$x^n\overset{n\rightarrow \infty} \longrightarrow -\infty$ we can assume 
$\tau_{x^n} \psi\subset (-\infty, 0)$
and in the case
$x^n\overset{n\rightarrow \infty}\longrightarrow +\infty$ we can assume 
$\tau_{x^n} \psi\subset (0, \infty)$.
In the first case we can combine \eqref{ADelta} and the translation invariance of the $H^1$ norm,
and hence \eqref{appext} becomes:
$$\|(e^{-i t^nH_0}(\tau_{x^n}\psi\star\rho_q))(x)\cdot 1_{x\geq 0}(x)+
(e^{-it^nH_0}(
\tau_{x^n}\psi\star\rho_q))(-x)\cdot 1_{x\leq 0}(x)\|_{H^1}\overset{
n\rightarrow \infty }\longrightarrow 0.
$$
Notice that $$(e^{-it^nH_0}(\tau_{x^n}\psi\star\rho_q))(x)
\cdot 1_{x\geq 0}(x)
=\tau_{x^n} \varphi(x) \cdot 1_{x\geq 0}(x)+ \tau_{x^n} r_n(x) \cdot 1_{x\geq 0}(x)$$
and  
$$(e^{-i t^n H_0}(
\tau_{x^n}\psi\star\rho_q))(-x)\cdot 1_{x\leq 0}(x)=(\tau_{x^n}
\varphi )(-x)\cdot 1_{x\leq 0}(x)+\tau_{x^n} r_n(x)\cdot 1_{x\leq 0}(x)$$
where $\varphi=e^{-i\bar t H_0}(\psi\star\rho_q)$
and $r_n(x)= (e^{-i t^n H_0} -e^{-i\bar t H_0}) (
\psi\star\rho_q)$. Notice that by continuity property of the flow
$e^{-itH_0}$ we get $r_n\overset{H^1}
 \longrightarrow 0$ and
since we are assuming $x^n\overset{ n\rightarrow \infty } \longrightarrow -\infty$, we get
$$
\|(\tau_{x^n} \varphi)(x) \cdot 1_{x\geq 0}(x)\|_{H^1}\overset{ n\rightarrow \infty }
 \longrightarrow 0, \quad \|(\tau_{x^n} \varphi)(-x)
\cdot 1_{x\leq 0}(x)\|_{H^1} \overset{ n\rightarrow \infty }
 \longrightarrow 0.$$
In the second case (i.e. $x^n\overset{ n\rightarrow \infty } \longrightarrow +\infty$) we get 
$$ e^{-it^nH_q} \tau_{x^n}\psi(x)=  R e^{-it^nH_q} R \tau_{x^n}\psi$$
where $Rf(x)=f(-x)$ and we used $R^2=Id$ and $[e^{-i t^n H_q}, R]=0$.
Hence \eqref{appext} follows since we have the following identity
$$e^{-it^nH_q} \tau_{x^n}\psi-e^{-it^nH_0} \tau_{x^n}\psi=  
R( e^{-it^nH_q} R \tau_{x^n} \psi - e^{-it^nH_0} R \tau_{x^n} \psi),$$
$R$ is an isometry in $H^1$ and moreover 
$R \tau_{x^n} \psi=\tau_{-x^n} R\psi(x)$ where $-x^n
\overset{ n\rightarrow \infty } \longrightarrow -\infty$.
Hence we are reduced to the previous case (i.e. $x^n\overset{ n\rightarrow \infty }\longrightarrow -\infty$).
\\
\\
Next we prove \eqref{abstr2}, i.e.
\begin{equation}\label{equile}
t^n\overset{n\rightarrow \infty}
 \longrightarrow \pm \infty \Longrightarrow \hbox{ (up to subsequence) } \|e^{it^nH_q} \tau_{x^n} \psi\|_{L^p} \overset{n\rightarrow \infty}\longrightarrow 0,
 \quad \forall \psi\in H^1,
\end{equation}
where $p\in (2, \infty)$.
We treat only the case $t^n\overset{n\rightarrow \infty}
 \longrightarrow +\infty$, the other case is equivalent.
By combining the time decay estimate $\|e^{-itH_q}\|_{{\mathcal L}(L^1, L^\infty)}
\leq C t^{-1/2}$ with the uniform bound $\|e^{-itH_q}\|_{{\mathcal L}(H^1, L^p)}\leq C$ and with a density argument, we deduce $
e^{itH_q}\overset{t\rightarrow \infty}\longrightarrow 0$ in the strong topology of the operators ${\mathcal L}(H^1, L^p)$.
Hence we conclude \eqref{equile} in the case
$x^n\overset{n\rightarrow \infty} \longrightarrow \bar x\in \R$
by a compactness argument.
Hence it is sufficient to prove
\begin{equation}\label{appextp}
\|e^{-it^nH_q}\tau_{x_n}\psi(x)-
e^{-it^nH_0} \tau_{x_n}\psi(x)\|_{L^p}
\overset{ n\rightarrow \infty } \longrightarrow 0, \hbox{ if } \quad t^n\overset{ n\rightarrow \infty }
\longrightarrow -\infty, \quad x^n\overset{ n\rightarrow \infty }
\longrightarrow \pm \infty,\end{equation}
and to conclude by the decay properties of the group $e^{itH_0}$.
The proof of \eqref{appextp} can be done via a density argument by assuming
$\psi\in C^\infty_0(\R)$
and $x^n\overset{n\rightarrow \infty}
\longrightarrow -\infty$ (the case $x^n\overset{n\rightarrow \infty}
\longrightarrow +\infty$ can be reduced to the previous one 
via the reflexion operator $R$, exactly as we did above along the proof
of \eqref{appext}).
Hence we can rely on \eqref{ADelta} and we are reduced to prove
 $$\|e^{-it^nH_0}(\tau_{x^n}\psi\star\rho_q))(x)
\cdot 1_{x\geq 0}(x)\|_{L^p}\overset{ n\rightarrow \infty } \longrightarrow 0,
$$$$ \|e^{-i t^n H_0}(
\tau_{x^n}\varphi\star\rho_q))(-x)\cdot 1_{x\leq 0}(x)\|_{L^p}
\overset{ n\rightarrow \infty } \longrightarrow 0$$
which is equivalent to 
$$\|\tau_{x^n} (e^{-it^nH_0}(\psi\star\rho_q))(x)\cdot
1_{x\geq 0}(x)\|_{L^p}\overset{ n
\rightarrow \infty} \longrightarrow 0,$$$$ \|\tau_{x^n} (e^{-i t^n H_0}(
\varphi\star\rho_q))(-x)\cdot 1_{x\leq 0}(x)\|_{L^p}\overset{ n
\rightarrow \infty} \longrightarrow 0.$$
Notice that the facts above follow by the translation invariance of the $L^p$ norm and by the property
$
e^{-itH_0}\overset{t\rightarrow -\infty}\longrightarrow 0$ in the strong topology of the operators ${\mathcal L}(H^1, L^p)$.\

\section{Preliminary results}

Since now on we shall use the following notations:
$$r=\alpha+2, \quad p=\frac{2\alpha(\alpha+2)}{\alpha+4}, \quad q=\frac{2\alpha(\alpha+2)}{\alpha^2 - \alpha -4},
$$
and $\alpha>4$ is the same parameter as in \eqref{NLS}. 
\subsection{Strichartz estimates}

We recall the following homogeneous and 
inhomogeneous Strichartz estimates:
\begin{align}
\|e^{-itH_q}\varphi\|_{L^pL^r}&\leq C \|\varphi\|_{H^1};
\\
\|e^{-itH_q}\varphi\|_{L^\alpha L^\infty}&\leq C \|\varphi\|_{H^1};
\\
\|\int_0^t e^{-i(t-s)H_q} F(s) ds\|_{L^p L^r}
&\leq C \|F\|_{L^{q'}L^{r'}};
\\
\|\int_0^t e^{-i(t-s)H_q} F(s) ds\|_{L^\alpha L^\infty}
&\leq C \|F\|_{L^{q'}L^{r'}}.
\end{align}
Since $p,r>6$, the first one is obtained by $\|e^{-itH_q}\varphi\|_{L^pL^{2p/(p-4)}}
\leq C\|\varphi\|_{L^2}$ in conjunction with a Sobolev embedding.
Since $\alpha>4$, the second one is obtained by interpolating between the 1-d admissible space $L^4L^\infty$ and $L^\infty L^\infty$. The third one enters the frame of non-admissible inhomogeneous Strichartz estimates in Lemma 2.1 in Cazenave-Weissler \cite{CaWe92}. The last one is contained in Theorem 1.4 of Foschi \cite{Fo05}, who extends the  non-admissible inhomogeneous Strichartz exponents.

\subsection{Perturbative nonlinear results}

\begin{prop}\label{giube}
Let $\varphi\in H^1$ be given and assume that the unique global solution to \eqref{NLS} 
$u(t, x)\in {\mathcal C} H^1$ satisfies $u(t, x) \in L^pL^r$.
Then $Sc(\varphi)$ occurs. 
\end{prop}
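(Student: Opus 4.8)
The plan is to produce the scattering states through Duhamel's formula and to reduce the entire statement to an elementary $L^1$-in-time estimate on the nonlinearity. Writing $F(s)=u(s)|u(s)|^\alpha$, formula \eqref{NLS} gives $e^{itH_q}u(t)=\varphi-i\int_0^t e^{isH_q}F(s)\,ds$, so I would define $\varphi_\pm=\varphi-i\int_0^{\pm\infty}e^{isH_q}F(s)\,ds$. Since $e^{-itH_q}$ commutes with $H_q$, the quantity $\triple{v}^2:=(H_qv,v)+\|v\|_{L^2}^2$ is preserved by the flow and, by \eqref{equiv}, is equivalent to $\|v\|_{H^1}^2$. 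Hence
$$\|e^{-itH_q}\varphi_\pm-u(t)\|_{H^1}\lesssim\triple{e^{-itH_q}\big(\varphi_\pm-e^{itH_q}u(t)\big)}=\triple{\int_t^{\pm\infty}e^{isH_q}F(s)\,ds}\le\int_t^{\pm\infty}\triple{F(s)}\,ds,$$
the last inequality being Minkowski's together with the fact that $e^{isH_q}$ is a $\triple{\cdot}$-isometry. Thus, provided $F\in L^1(\R;H^1)$, the integrals defining $\varphi_\pm$ converge absolutely in $H^1$ and the right-hand side above is the tail of a convergent integral, so it tends to $0$ and $Sc(\varphi)$ occurs.

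It therefore remains to prove $F\in L^1(\R;H^1)$. Pointwise in time, using the one-dimensional embedding $H^1\hookrightarrow L^\infty$ and $|\partial_x(u|u|^\alpha)|\lesssim|u|^\alpha|\partial_x u|$, I get $\|F(t)\|_{H^1}\lesssim\|u(t)\|_{L^\infty}^\alpha\|u(t)\|_{H^1}$, whence
$$\|F\|_{L^1H^1}\lesssim\Big(\sup_t\|u(t)\|_{H^1}\Big)\,\|u\|_{L^\alpha L^\infty}^\alpha.$$
Two facts then close the argument. First, $\sup_t\|u(t)\|_{H^1}<\infty$: this is exactly where the defocusing, repulsive character $q>0$ is used, since conservation of the mass together with conservation of the energy $E$ — all of whose terms, including the delta contribution $\frac q2|u(0)|^2$, are nonnegative — bounds $\|u(t)\|_{L^2}$ and $\|\partial_x u(t)\|_{L^2}$ uniformly in $t$. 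Second, I must promote the hypothesis $u\in L^pL^r(\R)$ to the auxiliary global bound $u\in L^\alpha L^\infty(\R)$.

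This promotion is the only genuinely non-soft point. Because $\|u\|_{L^pL^r(\R)}<\infty$, I partition $\R$ into finitely many intervals $I_1,\dots,I_N$ on each of which $\|u\|_{L^pL^r(I_j)}\le\eta$ with $\eta$ small. On $I_j=[a_j,b_j]$ I apply to the Duhamel representation based at $a_j$ the homogeneous estimate $\|e^{-itH_q}\varphi\|_{L^\alpha L^\infty}\le C\|\varphi\|_{H^1}$ and the inhomogeneous estimate $\|\int_0^t e^{-i(t-s)H_q}F\,ds\|_{L^\alpha L^\infty}\le C\|F\|_{L^{q'}L^{r'}}$, together with the identity $\|F\|_{L^{q'}L^{r'}(I_j)}=\|u\|_{L^pL^r(I_j)}^{\alpha+1}$, which holds because the exponents $p,q,r$ are chosen precisely so that $(\alpha+1)r'=r$ and $(\alpha+1)q'=p$. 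This yields $\|u\|_{L^\alpha L^\infty(I_j)}\lesssim\|u(a_j)\|_{H^1}+\eta^{\alpha+1}\lesssim\sup_t\|u(t)\|_{H^1}$, and summing the $L^\alpha$ contributions over the $N$ intervals gives $u\in L^\alpha L^\infty(\R)$, completing the proof.

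The step I expect to be the main obstacle is precisely this globalization of the $L^\alpha L^\infty$ norm: one must check that the Strichartz constants are uniform across the subintervals and that the small-norm partition is admissible (which it is, since finiteness of $\|u\|_{L^pL^r(\R)}$ rules out concentration of the norm). It is worth stressing that the delta interaction never enters these estimates in an essential way; it intervenes only through the norm equivalence \eqref{equiv} and through the fact that $e^{-itH_q}$ preserves the form norm $\triple{\cdot}$. Once the global $L^\alpha L^\infty$ bound is secured, the scattering follows exactly as in the translation-invariant situation.
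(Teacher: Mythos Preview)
Your proof is correct and follows essentially the same route as the paper: establish $u\in L^\alpha L^\infty$ from the hypothesis $u\in L^pL^r$, then use $\|F\|_{L^1H^1}\lesssim\|u\|_{L^\infty H^1}\|u\|_{L^\alpha L^\infty}^\alpha$ together with the Duhamel formula and the $H^1$-isometry of the propagator to obtain the Cauchy property of $e^{itH_q}u(t)$ at $t\to\pm\infty$.

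The only difference worth noting is that your partition-into-small-intervals argument for the $L^\alpha L^\infty$ bound is unnecessary work. The paper obtains this bound in one line by applying the Strichartz estimates \emph{globally} to the Duhamel formula based at $t=0$:
\[
\|u\|_{L^\alpha L^\infty}\leq C\big(\|\varphi\|_{H^1}+\|u|u|^\alpha\|_{L^{q'}L^{r'}}\big)\leq C\big(\|\varphi\|_{H^1}+\|u\|_{L^pL^r}^{\alpha+1}\big)<\infty.
\]
No smallness or bootstrap is required, because $\|u\|_{L^pL^r}$ is finite by hypothesis and appears only on the right-hand side; the partition trick is needed only when the Strichartz norm one is trying to bound also appears on the right (as in the proof of global well-posedness or of Proposition~\ref{giuti}). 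So what you flagged as ``the main obstacle'' is in fact the trivial step.
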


\begin{proof}
We first prove that
$$u(t, x)\in L^\infty H^1\cap L^pL^r \Longrightarrow 
u(t, x)\in L^\alpha L^\infty.
$$
It follows by the following chain of inequalities
$$\|u\|_{L^\alpha  L^\infty}\leq C( \|\varphi\|_{H^1}
+ \|u|u|^\alpha\|_{L^{q'} L^{r'}} ) \leq C( 
\|\varphi\|_{H^1}
+ \|u\|_{L^pL^r}^{\alpha+1})<\infty,
$$
where we have used the Strichartz estimates above.
We shall exploit also the following trivial estimate:
$$
\|\int_{t_1}^{t_2} e^{isH_q} F(s) ds\|_{H^1}
\leq \|F(s)\|_{L^1((t_1, t_2); H^1)}, \quad \forall t_1, t_2.
$$
Hence by the integral equation
$$\|e^{it_1H_q} u(t_1,.)- e^{it_2H_q} u(t_2,.)\|_{H^1}=
\|\int_{t_1}^{t_2} e^{isH_q} (u(s)|u(s)|^\alpha) ds\|_{H^1}
$$$$\leq \|u(s)|u(s)|^\alpha\|_{L^1((t_1,t_2);H^1)}
\leq C \|u\|_{L^\infty H^1} 
\|u\|_{L^\alpha_{(t_1,t_2)}  L^\infty}^\alpha
\overset{t_1, t_2\rightarrow \pm \infty} \longrightarrow
0.$$
Hence we get scattering via a standard argument.


\end{proof}

\begin{prop}\label{giuti}
There exists $\epsilon_0>0$ such that : 
$$\varphi\in H^1, \quad \|\varphi\|_{H^1}<\epsilon_0
\Longrightarrow \|u\|_{L^pL^r}\leq C_{\epsilon_0}\|\varphi\|_{H^1}, \quad \|v\|_{L^pL^r}\leq C_{\epsilon_0}\|\varphi\|_{H^1},$$
where $u,v$ are the solutions of \eqref{NLS} and \eqref{NLSconstant} respectively.  
\end{prop}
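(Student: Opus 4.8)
The plan is to run the classical small-data contraction/continuity argument, relying only on the Strichartz estimates recalled above; the crucial point is that the exponents $p,q,r$ have been chosen precisely so that the nonlinearity closes in a single $L^pL^r$ norm. I treat $u$, the solution of \eqref{NLS}; the case of $v$ is identical since $e^{-itH_0}$ satisfies the same estimates. First I would write the Duhamel formula
$$u(t)=e^{-itH_q}\varphi -i\int_0^t e^{-i(t-s)H_q}\big(u(s)|u(s)|^\alpha\big)\,ds$$
and measure it in $L^pL^r$. The homogeneous Strichartz estimate gives $\|e^{-itH_q}\varphi\|_{L^pL^r}\leq C\|\varphi\|_{H^1}$, while the inhomogeneous one controls the Duhamel term by $C\|u|u|^\alpha\|_{L^{q'}L^{r'}}$, reducing matters to estimating this last quantity.

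The heart of the matter is then a numerological check. Since $r=\alpha+2$ one has $r'(\alpha+1)=r$, so that $\||u|^{\alpha+1}\|_{L^{r'}}=\|u\|_{L^r}^{\alpha+1}$ in space; and a direct computation from the definitions of $p$ and $q$ gives $q'(\alpha+1)=p$, so that integrating in time yields $\|u|u|^\alpha\|_{L^{q'}L^{r'}}=\|u\|_{L^pL^r}^{\alpha+1}$. Combining the three displays produces the closed inequality
$$\|u\|_{L^pL^r}\leq C\|\varphi\|_{H^1}+C\|u\|_{L^pL^r}^{\alpha+1}.$$

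Finally I would upgrade this to a uniform bound by a continuity argument on $X(T)=\|u\|_{L^p((0,T);L^r)}$. Because the global solution lies in ${\mathcal C}H^1$ and $H^1\subset L^r$, the function $X(T)$ is finite for each $T$, nondecreasing and continuous, with $X(0)=0$, and the estimate above applied on $(0,T)$ gives $X(T)\leq C\|\varphi\|_{H^1}+CX(T)^{\alpha+1}$ for every $T$. For $\|\varphi\|_{H^1}<\epsilon_0$ small enough the admissible region $\{X\geq 0:\ X-CX^{\alpha+1}\leq C\|\varphi\|_{H^1}\}$ splits into a small interval near $0$ and a region bounded away from it; since $X$ is continuous and starts at $0$, it is trapped in the small interval, whence $X(T)\leq 2C\|\varphi\|_{H^1}$ uniformly in $T$. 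Letting $T\to\infty$ gives the claim with $C_{\epsilon_0}=2C$, and the identical computation for $e^{-itH_0}$ and \eqref{NLSconstant} yields the bound for $v$.

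The main (and essentially only) subtlety is the numerology closing the estimate, namely verifying that the nonlinearity is mass-supercritical yet subcritical for this Strichartz pair and maps $L^pL^r$ into $L^{q'}L^{r'}$ with the gain $\alpha+1$ in the power; once this is in place the continuity argument is routine, requiring merely the a priori local finiteness of $X(T)$ coming from the already-established global $H^1$ theory, and no local well-posedness statement needs to be reproved here.
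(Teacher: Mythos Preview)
Your proof is correct and follows exactly the approach of the paper: apply the homogeneous and inhomogeneous Strichartz estimates to the Duhamel formula, use the numerology $r'(\alpha+1)=r$, $q'(\alpha+1)=p$ to close in $L^pL^r$, and conclude by a continuity argument on $T$. You have in fact spelled out more detail than the paper does (the explicit exponent check and the trapping argument yielding the quantitative bound $2C\|\varphi\|_{H^1}$), but the argument is the same.
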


\begin{proof}
It is sufficient to check that
if $u(t, x)\in \mathcal{C} H^1$ is the unique global solution to \eqref{NLS},
then $u(t, x)\in L^pL^q$. In fact by the Strichartz estimates we get
$$\|u\|_{L^p((-T, T);L^r)}\leq C( \|\varphi\|_{H^1} +
\|u|u|^\alpha\|_{L^{q'}((-T, T);L^{r'})})
\leq C( \epsilon + \|u\|_{L^p((-T, T); L^r)}^{\alpha+1}).
$$
We conclude by a continuity argument that if $\epsilon$ is small enough, then
$\sup_T \|u\|_{L^p((-T, T); L^r)}<\infty$ and hence $u\in L^pL^r$. The proof goes the same for $v$.

\end{proof}


We also need the following perturbation result.
\begin{prop}\label{pertub}
For every $M>0$ there exists $\epsilon=\epsilon(M)>0$ and $C=C(M)>0$ 
such that the following occurs.
Let
$v\in {\mathcal C} H^1 \cap L^pL^r$ 
be a solution of the integral equation with source term $e(t,x)$: 
$$v(t,x)= e^{-itH_q} \varphi - i\int_0^t e^{-i(t-s)H_q} (v(s)|v(s)|^\alpha)(x) ds
+e(t,x)
$$
with $\|v\|_{L^p  L^r}<M$ and 
$\|e\|_{L^p  L^r}<\epsilon$. Assume moreover that
$\varphi_0 \in H^1$ is such that $\|e^{-itH_q }\varphi_0\|_{L^p  L^r}<\epsilon$,
then the solution $u(t, x)$ to \eqref{NLS} with initial condition $\varphi+\varphi_0$:
$$u(t,x)= e^{-itH_q} (\varphi+\varphi_0) - i\int_0^t e^{-i(t-s)H_q} (u(s)|u(s)|^\alpha)ds,
$$
satisfies $u\in L^p  L^r$ and moreover $\|u-v\|_{L^p L^r}<C\epsilon$.
\end{prop}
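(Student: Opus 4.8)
The plan is to reduce everything to a bound for the difference $w=u-v$ in the norm $L^pL^r$, proving $\|w\|_{L^pL^r}\le C(M)\epsilon$; this is stronger than the asserted $\|u-v\|_{L^pL^r}<C\epsilon$ and it automatically yields $u=v+w\in L^pL^r$. Subtracting the two integral equations, $w$ solves
\[
w(t)=e^{-itH_q}\varphi_0-e(t)-i\int_0^t e^{-i(t-s)H_q}F(s)\,ds,\qquad F=u|u|^\alpha-v|v|^\alpha,
\]
and pointwise $|F|\le C(|u|^\alpha+|v|^\alpha)|w|$. The decisive algebraic fact is that the exponents fixed in Section 3 satisfy $\tfrac{\alpha}{r}+\tfrac1r=\tfrac1{r'}$ (since $r=\alpha+2$) and $\tfrac{\alpha+1}{p}=\tfrac1{q'}$ (which is exactly the identity $\tfrac1{q'}=\tfrac{(\alpha+1)(\alpha+4)}{2\alpha(\alpha+2)}$ coming from the definitions of $p,q$). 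Hence Hölder in space and time gives, on any time interval $I$,
\[
\|F\|_{L^{q'}(I;L^{r'})}\le C\bigl(\|u\|_{L^p(I;L^r)}^\alpha+\|v\|_{L^p(I;L^r)}^\alpha\bigr)\|w\|_{L^p(I;L^r)}.
\]
Feeding this into the inhomogeneous Strichartz estimate of Section 3, and using $\|e^{-itH_q}\varphi_0\|_{L^pL^r}<\epsilon$ together with $\|e\|_{L^pL^r}<\epsilon$, a single global application produces $\|w\|_{L^pL^r}\le 2\epsilon+C(\|u\|_{L^pL^r}^\alpha+M^\alpha)\|w\|_{L^pL^r}$. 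The coefficient $M^\alpha$ is not small, so this cannot be closed directly: this is the main obstacle, reflecting the genuinely long-time nature of the statement.

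To overcome it I would first fix a small threshold $\delta=\delta(\alpha)>0$ and partition $\R$ into $J=J(M)\lesssim (M/\delta)^p+1$ consecutive intervals $I_k=[t_{k-1},t_k]$ (with unbounded extreme ones) such that $\|v\|_{L^p(I_k;L^r)}\le\delta$; this is possible because $t\mapsto\int_{-\infty}^t\|v(s)\|_{L^r}^p\,ds$ is continuous and bounded by $M^p$. On $I_k$ I split the Duhamel integral as $\int_0^t=\int_0^{t_{k-1}}+\int_{t_{k-1}}^t$ and keep the additive term $-e(t)$ unchanged (rather than re-basing it at $t_{k-1}$, which would illegitimately require $H^1$ control of $e(t_{k-1})$). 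For the tail $\int_0^{t_{k-1}}$, the phase is always retarded for $t\in I_k$, so it equals $\int_0^t e^{-i(t-s)H_q}(F\mathbf 1_{[0,t_{k-1}]})\,ds$ and the inhomogeneous Strichartz estimate bounds its $L^p(I_k;L^r)$ norm by $C\sum_{j<k}a_j$, where $a_j:=\|F\|_{L^{q'}(I_j;L^{r'})}$. Writing $b_k:=\|w\|_{L^p(I_k;L^r)}$ and $\eta_k:=2\epsilon+C\sum_{j<k}a_j$, I thus obtain $b_k\le\eta_k+Ca_k$, with $\|u\|_{L^p(I_k;L^r)}\le\delta+b_k$ controlling $a_k$ through the bilinear estimate above.

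On each $I_k$ I would run a standard continuity (bootstrap) argument in $T$ applied to $\Phi(T)=\|w\|_{L^p([t_{k-1},T];L^r)}$, which is finite and continuous by the local $L^p_{loc}L^r$ theory for $u$; the bootstrap inequality reads $\Phi(T)\le\eta_k+C\bigl((\delta+\Phi(T))^\alpha+\delta^\alpha\bigr)\Phi(T)$. Choosing $\delta$ and $\eta_k$ below an $\alpha$-dependent threshold makes the nonlinear prefactor $\le\tfrac12$, so the continuity argument closes and yields $b_k\le 2\eta_k$ and $a_k\le \tfrac1C(\eta_k+\epsilon)$, whence the geometric recursion $B_k:=\sum_{j\le k}a_j\le 2B_{k-1}+c\epsilon$, i.e. $\eta_k\lesssim 2^k\epsilon$. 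Here lies the true difficulty: the entering error accumulates geometrically across the $J(M)$ intervals, so I must choose $\epsilon=\epsilon(M)$ only at the end, small enough that $\eta_k+\epsilon\lesssim 2^{J(M)}\epsilon$ stays below the bootstrap threshold for every $k\le J$; since $\delta$ depends only on $\alpha$ and $J$ only on $M$, this is consistent. Summing $\|w\|_{L^pL^r}^p=\sum_k b_k^p\le J\,(C2^J\epsilon)^p$ gives $\|w\|_{L^pL^r}\le C(M)\epsilon$, and running the symmetric argument on $(-\infty,0]$ (using the time-reversed retarded estimate, valid since $H_q$ is self-adjoint) completes both time directions. This proves $u\in L^pL^r$ and the claimed bound $\|u-v\|_{L^pL^r}<C\epsilon$.
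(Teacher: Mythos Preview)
Your argument is correct and is precisely the standard long-time stability scheme: partition time into $J(M)$ intervals on which $\|v\|_{L^p(I_k;L^r)}\le\delta$, run a continuity/bootstrap argument on each interval using the H\"older identity $(\alpha+1)/p=1/q'$, $(\alpha+1)/r=1/r'$ together with the inhomogeneous Strichartz estimate of Section~3.1, and accept the geometric accumulation $\eta_k\lesssim 2^k\epsilon$ by choosing $\epsilon=\epsilon(M)$ only after $J(M)$ is fixed. The paper itself gives no proof here but simply refers to Fang--Xie--Cazenave \cite{FaXiCa11} with $N=1$; your write-up is exactly the argument one finds in that reference (and in the Kenig--Merle/Colliander--Keel--Staffilani--Takaoka--Tao literature), so you have supplied what the paper leaves to citation.
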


\begin{proof} 
It is contained in \cite{FaXiCa11}, setting the space dimension $N=1$. 

\end{proof}

\subsection{The nonlinear profiles}

\begin{prop}\label{sfugg}
Let $(x^n)_{n\in \N}$ be a sequence of real numbers 
such that $|x^n|\rightarrow +\infty$, $\psi\in H^1$
and $U(t, x)\in {\mathcal C} H^1\cap L^pL^r$ be the unique solution to \eqref{NLSconstant}
with initial data $\psi$.
Then we have 
$$U_n(t, x)=e^{-itH_q} \psi_n + i\int_0^t e^{-i(t-s)H_q} (U_n(s)|U_n(s)|^\alpha) ds +
g_n(t,x)$$
where
$$\psi_n=\tau_{x^n}\psi \hbox{ and } U_n=U(x-x^n, t)$$
and
$$
\|g_n(t,x)\|_{L^p L^r}\overset {n\rightarrow \infty} \longrightarrow 0.$$
\end{prop}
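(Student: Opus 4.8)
The plan is to exploit that $U$ solves the translation invariant equation \eqref{NLSconstant}, so that its spatial translate behaves exactly like a free evolution, and then to absorb the whole discrepancy between the free and the perturbed flows into $g_n$, showing it is negligible because the relevant data live far from the support of the $\delta$-interaction. Since $H_0=-\frac12\partial_{xx}$ commutes with $\tau_{x^n}$ and the nonlinearity is pointwise, the translate $U_n=\tau_{x^n}U$ satisfies the free Duhamel identity $U_n(t)=e^{-itH_0}\psi_n-i\int_0^t e^{-i(t-s)H_0}(U_n(s)|U_n(s)|^\alpha)\,ds$. Subtracting this from the claimed representation, the remainder splits as
$$g_n(t)=\big(e^{-itH_0}-e^{-itH_q}\big)\psi_n-i\int_0^t\big(e^{-i(t-s)H_0}-e^{-i(t-s)H_q}\big)(U_n(s)|U_n(s)|^\alpha)\,ds,$$
a linear piece plus a nonlinear Duhamel piece, each built from the difference $e^{-itH_0}-e^{-itH_q}$ acting on data concentrated near $x^n$. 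Since $|x^n|\to\infty$, after passing to a subsequence I may assume $x^n\to-\infty$; the case $x^n\to+\infty$ follows by conjugating with the reflection $R$, which commutes with $e^{-itH_q}$ and leaves \eqref{NLSconstant} invariant, and is an isometry on $L^pL^r$.

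For the linear piece I would argue by density, taking $\psi\in C_0^\infty(\R)$, so that $\tau_{x^n}\psi$ is supported in $(-\infty,0)$ once $x^n$ is sufficiently negative. Formula \eqref{ADelta}, together with the commutation of $e^{-itH_0}$ and of convolution with $\tau_{x^n}$, then gives that $(e^{-itH_q}-e^{-itH_0})\psi_n$ equals $W(t,x-x^n)\,1_{x\geq0}(x)+W(t,-x-x^n)\,1_{x\leq0}(x)$, where $W=e^{-itH_0}(\psi\star\rho_q)\in L^pL^r$ by the homogeneous Strichartz estimate (recall $\psi\star\rho_q\in H^1$). Both summands have $L^r_x$-norm equal to $\left(\int_{-x^n}^\infty|W(t,y)|^r\,dy\right)^{1/r}$, which tends to $0$ for a.e.\ $t$ and is dominated by $\|W(t,\cdot)\|_{L^r}$; dominated convergence in the $t$-integral then yields $\|(e^{-itH_0}-e^{-itH_q})\psi_n\|_{L^pL^r}\to0$.

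The nonlinear piece is the main obstacle, because the source $F=U|U|^\alpha$ is spread over all of space, so \eqref{ADelta} cannot be applied to it directly. I would first record the uniform bound obtained from the inhomogeneous Strichartz estimate applied to each of $e^{-itH_q}$ and $e^{-itH_0}$, namely $\|\int_0^t(e^{-i(t-s)H_0}-e^{-i(t-s)H_q})\tau_{x^n}F(s)\,ds\|_{L^pL^r}\leq C\|F\|_{L^{q'}L^{r'}}$, where $F\in L^{q'}L^{r'}$ since $U\in L^pL^r\cap L^\infty H^1$. Given $\epsilon>0$ I would approximate $F$ in $L^{q'}L^{r'}$ by a function $\tilde F$ with compact support in space-time; by translation invariance of the norm and the uniform bound, the contribution of $F-\tilde F$ to the Duhamel difference is $\leq C\epsilon$ uniformly in $n$. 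For the compactly supported $\tilde F$, once $x^n$ is negative enough each slice $\tau_{x^n}\tilde F(s,\cdot)$ is supported in $(-\infty,0)$, so \eqref{ADelta} applies slicewise; since the cutoffs and the reflection in \eqref{ADelta} act on the output variable and do not depend on $s$, they can be pulled outside the $s$-integral, reducing the Duhamel difference to $G_n(t,x)\,1_{x\geq0}+G_n(t,-x)\,1_{x\leq0}$ with $G_n=\tau_{x^n}G$ and $G=\int_0^t e^{-i(t-s)H_0}(\tilde F(s)\star\rho_q)\,ds\in L^pL^r$ (here $\tilde F\star\rho_q\in L^{q'}L^{r'}$ because $\|\tilde F(s)\star\rho_q\|_{L^{r'}}\leq\|\rho_q\|_{L^1}\|\tilde F(s)\|_{L^{r'}}$). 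The same dominated-convergence tail estimate as in the linear step gives that this tends to $0$ in $L^pL^r$, whence $\limsup_n\|g_n\|_{L^pL^r}\leq C\epsilon$ for every $\epsilon>0$, and the claim follows. The delicate point is thus not any single estimate but the interplay of the uniform inhomogeneous Strichartz bound with the density reduction, which is what allows the explicit kernel formula \eqref{ADelta}, valid only for compactly supported data, to be used on a source that is genuinely global in space.
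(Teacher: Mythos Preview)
Your proof is correct and follows essentially the same route as the paper: split $g_n$ into the linear and Duhamel pieces, reduce by density to compactly supported data so that \eqref{ADelta} applies, and handle $x^n\to+\infty$ via conjugation by the reflection $R$. The only minor difference is in the nonlinear piece, where the paper places the source $U|U|^\alpha$ in $L^1H^1$ (using $U\in L^\alpha L^\infty$) and invokes the homogeneous Strichartz estimate together with Minkowski, whereas you place it in $L^{q'}L^{r'}$ and use the inhomogeneous Strichartz estimate directly; both choices lead to the same tail-of-translate dominated convergence argument.
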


\begin{proof}
We are reduced to show:
\begin{align}\label{formfin1} \|e^{-itH_q}\psi_n -  e^{-itH_0}
\psi_n\|_{L^pL^r} &\overset {n\rightarrow \infty} \longrightarrow 0;\\
\label{formfin2} \|\int_0^t e^{-i(t-s)H_q} (U_n(s)|U_n(s)|^\alpha) ds 
&- \int_0^t e^{-i(t-s)H_0} (U_n(s)|U_n(s)|^\alpha) ds
\|_{L^pL^r} \overset {n\rightarrow \infty} \longrightarrow 0.
\end{align}
First we prove \eqref{formfin1} via
the formula \eqref{ADelta}.
Notice that by a density argument we can assume 
that $\psi$ is compactly supported.
Moreover modulo subsequence we can assume 
$x^n\overset{ n\rightarrow \infty } \longrightarrow \pm \infty$. In the case
$x^n\overset{ n\rightarrow \infty } \longrightarrow -\infty$ we get 
$\hbox{ supp } (\tau_{x^n} \varphi)\subset (-\infty, 0)$
and in the case
$x_n\overset{n\rightarrow \infty} \rightarrow +\infty$ we get
$\hbox{ supp } \tau_{x_n} \varphi\subset (0, \infty)$, for large $n$.
In the first case we can use \eqref{ADelta}
and hence \eqref{formfin1} becomes:
$$\|(e^{-i tH_0}(\tau_{x^n}\psi\star\rho_q))(x)\cdot 1_{x\geq 0}(x)+
(e^{-it H_0}(
\tau_{x^n}\psi\star\rho_q))(-x)\cdot 1_{x\leq 0}(x)
\|_{L^p L^r} \overset{n\rightarrow \infty}\longrightarrow 0.
$$
Notice that $$(e^{-itH_0}(\tau_{x^n}\psi\star\rho_q))(x)\cdot 
1_{x\geq 0}(x)
=\tau_{x^n} (e^{-itH_0} (\psi
\star\rho_q)) \cdot 1_{x\geq 0}(x)$$
and  
$$(e^{-i t H_0}(
\tau_{x^n}\psi\star\rho_q))(-x)\cdot 1_{x\leq 0}(x)=(\tau_{x^n} e^{-i t H_0}
(\psi\star\rho_q) )(-x)\cdot 1_{x\leq 0}(x)$$ 
hence we conclude
since by the usual Strichartz estimate we have
$e^{-i t H_0} (\psi\star\rho_q) \in L^p L^r$ 
and moreover we are assuming $x^n\overset{ n\rightarrow \infty } \longrightarrow -\infty$.\\
In the case $x^n\overset{ n\rightarrow \infty } \longrightarrow +\infty$
we can reduce to the case $x^n\overset{ n\rightarrow \infty } \longrightarrow -\infty$ via the reflection
operator $R$ (see the proof of \eqref{appext}).\\
\\
Next we focus on the proof of \eqref{formfin2}.
As above we shall assume $x^n\overset{ n\rightarrow \infty }
\longrightarrow -\infty$ (the other case 
$x^n\overset{ n\rightarrow \infty } \longrightarrow +\infty$ can be handled
via the reflection operator $R$).
We shall prove the following fact:
$$\|\int_0^t e^{-i(t-s)H_q} F(s, x-x^n) ds 
- \int_0^t e^{-i(t-s)H_0} F(s, x-x^n) ds\|_{L^p L^r}
\overset{n\rightarrow \infty} \longrightarrow 0, \quad \forall F(t,x)\in L^1 H^1.
$$
Notice that this fact is sufficient to conclude
since  by the classical scattering theory for NLS with 
constant coefficients we have $U|U|^\alpha\in L^1H^1$.

By a density argument we can assume 
the existence of a compact $K\subset \R$ such that
$\hbox { supp } F(t, x)\subset K, \quad \forall t$.
In particular for $n$ large enough we get
$\hbox{ supp } F(t, x-x^n) \subset (-\infty, 0), \quad \forall t.$
Hence we can use formula \eqref{ADelta} and we are reduce to prove
\begin{align*}&\|\big( \int_0^t e^{-i (t-s)H_0}(\tau_{x^n}F(s) \star\rho_q)(x)
ds\big) \cdot  1_{x\geq 0}(x)\|_{L^pL^r} \overset{n\rightarrow \infty}\longrightarrow 0;
\\\nonumber 
 &\|\big(\int_0^t
e^{-i(t-s) H_0}(
\tau_{x^n}F(s) \star\rho_q) (-x) ds\big) \cdot1_{x\leq 0}(x) \|_{L^pL^r}
\overset{n\rightarrow \infty}\longrightarrow 0.
\end{align*}
Next notice that
\begin{align*}\big( \int_0^t e^{-i (t-s)H_0}&(\tau_{x^n}F(s) \star\rho_q)(x)
ds\big) \cdot 1_{x\geq 0}(x)\\\nonumber&= \tau_{x^n} \big( \int_0^t (e^{-i (t-s)H_0}(F(s) \star\rho_q))(x)
ds\big) \cdot  1_{x\geq 0}(x)
\end{align*}
and
\begin{align*}
\big( \int_0^t
e^{-i(t-s) H_0}
&(
\tau_{x_n} F(s) \star\rho_q))(-x) ds\big)\cdot1_{x\leq 0}(x) 
\\\nonumber
&=\tau_{x^n} 
\big( \int_0^t
(e^{-i(t-s) H_0}(
F(s) \star\rho_q))ds\big)(-x) \cdot 1_{x\leq 0}(x) .
\end{align*}
Since $F(t,x) \star \rho_q\in L^1H^1$  
we get by Strichartz estimates
$\int_0^t
e^{-i(t-s) H_0}(
F(s) \star\rho_q)) ds\in L^p L^r$.
We conclude since 
$x^n\overset{n\rightarrow \infty} \longrightarrow -\infty$.

\end{proof}

\begin{prop}\label{nonlprof}
Let $\varphi\in H^1$, then there exist $W_\pm\in \mathcal {C}H^1\cap L^p_{\R^\pm}L^r$ solution to \eqref{NLS} and 
such that
\begin{equation}\label{ipso}\|W_\pm(t,.) - e^{-itH_q}\varphi\|_{H^1}\overset{t\rightarrow \pm \infty} \longrightarrow 0.\end{equation}
Moreover, if $(t^n)_{n\in \N}\subset \R$ is such that $t^n\overset{ n\rightarrow \infty }
\longrightarrow \mp \infty$, then
$$W_{\pm,n}(t,x)= e^{-it H_q} \varphi_n- i\int_0^t e^{-i(t-s)H_q} (W_{\pm,n}(s) |W_{\pm,n}
(s)|^\alpha) ds
+f_{\pm, n}(t,x)$$
where 
$$\varphi_n=e^{i t^n H_q} \varphi \hbox{ and } W_{\pm, n}(t,x)=
W_\pm (t-t^n, x)$$
and
$$\|f_{\pm,n}(t,x)\|_{L^p L^r}\overset{ n\rightarrow \infty }\longrightarrow 0.$$
\end{prop}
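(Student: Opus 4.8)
The plan is to build the wave operators $W_\pm$ by a final-data fixed point, and then to read off the error $f_{\pm,n}$ directly from the time-autonomy of \eqref{NLS}. I describe the $W_+$ case (so $t^n\to-\infty$); the case of $W_-$ is identical after reversing time.

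\emph{Construction of $W_+$ and \eqref{ipso}.} Since the homogeneous Strichartz estimate gives $e^{-itH_q}\varphi\in L^pL^r$, its tail $\|e^{-itH_q}\varphi\|_{L^p([T,\infty);L^r)}$ tends to $0$ as $T\to+\infty$. I would therefore set up a contraction in $L^p([T,\infty);L^r)$ for the final-data integral equation
$$W_+(t)=e^{-itH_q}\varphi+i\int_t^{+\infty}e^{-i(t-s)H_q}\big(W_+(s)|W_+(s)|^\alpha\big)\,ds,$$
using the inhomogeneous Strichartz estimates recalled above and choosing $T$ so large that the free tail is below the smallness threshold. This produces a solution on $[T,\infty)$ that solves \eqref{NLS}; the $H^1$-scattering \eqref{ipso} then follows by bounding the Duhamel tail in $H^1$ by $\|W_+|W_+|^\alpha\|_{L^1([t,\infty);H^1)}$ exactly as in Proposition \ref{giube}. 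Global well-posedness (recalled in the introduction) extends $W_+$ to a solution in $\mathcal{C}H^1\cap L^p_{\R^+}L^r$.

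\emph{Integrability of the nonlinearity.} The argument of Proposition \ref{giube} gives $W_+\in L^\alpha_{\R^+}L^\infty$ out of $W_+\in L^\infty H^1\cap L^p_{\R^+}L^r$, and together with the pointwise-in-time bound $\|W_+|W_+|^\alpha\|_{H^1}\leq C\|W_+\|_{L^\infty}^\alpha\|W_+\|_{H^1}$ this yields the crucial integrability $\|W_+|W_+|^\alpha\|_{L^1_{\R^+}H^1}\leq C\|W_+\|_{L^\infty H^1}\|W_+\|_{L^\alpha_{\R^+}L^\infty}^\alpha<\infty$.

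\emph{The translated identity.} Because \eqref{NLS} is autonomous, $W_{+,n}(t)=W_+(t-t^n)$ again solves the equation; substituting $\tau=t-t^n$ into the final-data equation and changing variables $s\mapsto s-t^n$, and using $e^{-i(t-t^n)H_q}\varphi=e^{-itH_q}\varphi_n$, I obtain
$$W_{+,n}(t)=e^{-itH_q}\varphi_n+i\int_t^{+\infty}e^{-i(t-s)H_q}\big(W_{+,n}(s)|W_{+,n}(s)|^\alpha\big)\,ds.$$
Writing $\int_t^{\infty}=\int_0^{\infty}-\int_0^t$ and matching with the stated form isolates
$$f_{+,n}(t)=i\int_0^{+\infty}e^{-i(t-s)H_q}\big(W_{+,n}(s)|W_{+,n}(s)|^\alpha\big)\,ds=e^{-itH_q}\Phi_n,\qquad \Phi_n:=i\int_0^{+\infty}e^{isH_q}\big(W_{+,n}(s)|W_{+,n}(s)|^\alpha\big)\,ds.$$
Thus $f_{+,n}$ is a free evolution, the homogeneous Strichartz estimate gives $\|f_{+,n}\|_{L^pL^r}\leq C\|\Phi_n\|_{H^1}$, and undoing the translation $s\mapsto s-t^n$ (using the trivial $H^1$ bound from Proposition \ref{giube} and the uniform $H^1$-boundedness of the propagator) gives $\|\Phi_n\|_{H^1}\leq C\,\|W_+|W_+|^\alpha\|_{L^1([-t^n,\infty);H^1)}$. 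Since $-t^n\to+\infty$ and $W_+|W_+|^\alpha\in L^1_{\R^+}H^1$, this tail vanishes, so $\|f_{+,n}\|_{L^pL^r}\to0$.

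The fixed point and the Strichartz bookkeeping are routine; the one point to get right is the structural observation that, after exploiting time-autonomy, $f_{+,n}$ is \emph{exactly} the free evolution of the tail $\int_{-t^n}^{\infty}e^{isH_q}(W_+|W_+|^\alpha)\,ds$ of a globally convergent Duhamel integral. Its smallness is then immediate from the $L^1_{\R^+}H^1$ integrability of $W_+|W_+|^\alpha$ together with $-t^n\to+\infty$, and this is where I expect the main (though mild) effort to lie.
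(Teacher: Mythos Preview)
Your argument is correct and shares the paper's strategy: classical wave-operator construction for the first part, then time-autonomy plus Strichartz for the second. The paper's execution of the second part is more direct, though: since $W_{\pm,n}$ solves \eqref{NLS} with data $W_\pm(-t^n)$ at $t=0$, the ordinary Duhamel formula from $t=0$ gives immediately
\[
f_{\pm,n}(t)=e^{-itH_q}\bigl(W_\pm(-t^n)-\varphi_n\bigr),
\]
and then \eqref{ipso} (with $-t^n\to\pm\infty$) together with the homogeneous Strichartz estimate finishes in one line. Your $\Phi_n$ is in fact exactly $W_+(-t^n)-\varphi_n$, as one sees by evaluating the final-data equation at $\tau=-t^n$; so your detour through the $L^1_{\R^+}H^1$ integrability of $W_+|W_+|^\alpha$, while correct, is not needed here.
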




\begin{proof}The first part of the statement concerning the existence of wave operators is classical, since $e^{-it H_q}$ enjoys Strichartz estimates as $e^{-it H_0}$ and since we are in the defocusing case insuring global existence. 
For the second part of the statement we notice that by the translation invariance with respect to time we get
$f_{\pm , n}(t, x)= e^{-it H_q} (W_\pm (-t^n) -  \varphi_n)$. We conclude by combining Strichartz estimates with \eqref{ipso}.
\end{proof}

\begin{prop}\label{minusinf}
Let $(t^n)_{n\in \N}, (x^n)_{n\in \N}$ be sequences of numbers
such that $t^n\overset{ n\rightarrow \infty }
\longrightarrow \mp \infty$ and $|x^n|\overset{ n\rightarrow \infty } \longrightarrow + \infty$,
$\varphi\in H^1$ and $V_\pm (t, x)\in {\mathcal C} H^1\cap L^pL^r$ be a solution to \eqref{NLSconstant} such that
\begin{equation}\label{asympt}\|V_\pm (t,.)-e^{-itH_0} \varphi
\|_{H^1} \overset{ t\longrightarrow \pm \infty} \longrightarrow 0.\end{equation}
Then we have
$$V_{\pm,n}(t,x)= e^{-it H_q} \varphi_n-i \int_0^t e^{-i(t-s)H_q} (V_{\pm,n}(s) |V_{\pm,n}
(s)|^\alpha) ds
+e_{\pm, n}(t,x)$$
where 
$$\varphi_n=e^{i t^n H_q} \tau_{x_n} \varphi \hbox{ and } V_{\pm, n}(t,x)=
V_\pm (t-t^n, x-x^n)$$
and
$$\|e_{\pm,n}(t,x)\|_{L^p L^r}\overset{ n\rightarrow \infty }\longrightarrow 0.$$
\end{prop}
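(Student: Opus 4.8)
The plan is to substitute into the definition of the error the Duhamel formula for $V_{\pm,n}$ — which solves the constant-coefficient equation \eqref{NLSconstant} because it is a space-time translate of $V_\pm$ — and thereby split $e_{\pm,n}=L_n+N_n$ into a linear and a nonlinear piece. Writing the $H_0$-Duhamel identity $V_{\pm,n}(t)=e^{-itH_0}V_{\pm,n}(0)-i\int_0^t e^{-i(t-s)H_0}(V_{\pm,n}|V_{\pm,n}|^\alpha)\,ds$ and recalling $\varphi_n=e^{it^nH_q}\tau_{x^n}\varphi$, one finds
\begin{equation*}
L_n(t)=e^{-itH_0}\tau_{x^n}V_\pm(-t^n)-e^{-i(t-t^n)H_q}\tau_{x^n}\varphi,\qquad N_n(t)=i\int_0^t\big[e^{-i(t-s)H_q}-e^{-i(t-s)H_0}\big]\big(V_{\pm,n}|V_{\pm,n}|^\alpha\big)\,ds .
\end{equation*}
Both pieces have to be shown to vanish in $L^pL^r$, and I expect $N_n$ to be the main obstacle.

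For $L_n$ I would first use the scattering hypothesis \eqref{asympt} at the time $-t^n\to\pm\infty$ to write $V_\pm(-t^n)=e^{it^nH_0}\varphi+\rho_n$ with $\|\rho_n\|_{H^1}\to0$. Since translations commute with $e^{it^nH_0}$, the leading term becomes $[e^{-i(t-t^n)H_0}-e^{-i(t-t^n)H_q}]\tau_{x^n}\varphi$, whose $L^pL^r$-norm is invariant under the time shift $t\mapsto t-t^n$ and hence tends to $0$ by \eqref{formfin1} (this is exactly the first half of Proposition \ref{sfugg}, applicable since $|x^n|\to\infty$); the leftover $e^{-itH_0}\tau_{x^n}\rho_n$ is bounded in $L^pL^r$ by $C\|\rho_n\|_{H^1}\to0$ via the homogeneous Strichartz estimate.

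The delicate term is $N_n$. After reducing to $x^n\to-\infty$ through the reflection $R$ (using $[e^{-itH_q},R]=0$) and approximating $G(\sigma,\cdot):=V_\pm(\sigma)|V_\pm(\sigma)|^\alpha$ by a function compactly supported in space (density in $L^1H^1\cap L^{q'}L^{r'}$, the error being absorbed uniformly in $n$ by inhomogeneous Strichartz), the source $\tau_{x^n}G(s-t^n)$ is supported in $(-\infty,0)$ for large $n$, so \eqref{ADelta} applies under the integral. Commuting $\tau_{x^n}$, the cutoffs $1_{x\geq0}$ and the reflection through the free flow, I would reduce $\|N_n\|_{L^pL^r}$ to $\|1_{y\geq-x^n}\,\Psi_n\|_{L^pL^r}$, where, after the change of variables $\sigma=s-t^n$, $\tau=t-t^n$ and the time-translation invariance of the $L^p_t$ norm,
\begin{equation*}
\Psi_n(\tau)=\int_{-t^n}^{\tau}e^{-i(\tau-\sigma)H_0}\big(G(\sigma)\star\rho_q\big)\,d\sigma .
\end{equation*}

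The observation that unlocks the $n$-dependence is that $\Psi_n$ converges in $L^pL^r$. Setting $\Psi_\infty(\tau)=\int_{\pm\infty}^{\tau}e^{-i(\tau-\sigma)H_0}(G(\sigma)\star\rho_q)\,d\sigma$, one has $\Psi_n(\tau)-\Psi_\infty(\tau)=e^{-i\tau H_0}\int_{-t^n}^{\pm\infty}e^{i\sigma H_0}(G(\sigma)\star\rho_q)\,d\sigma$; since $G\star\rho_q\in L^1H^1$ (classical scattering yields $V_\pm|V_\pm|^\alpha\in L^1H^1$, and convolution with $\rho_q$ preserves $L^1H^1$) while $-t^n\to\pm\infty$, the $H^1$-norm of that integral is an $L^1$-tail tending to $0$, so homogeneous Strichartz gives $\|\Psi_n-\Psi_\infty\|_{L^pL^r}\to0$. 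As $\Psi_\infty$ is a fixed element of $L^pL^r$ (inhomogeneous Strichartz), its far-right spatial tail $\|1_{y\geq-x^n}\Psi_\infty\|_{L^pL^r}$ vanishes by dominated convergence when $-x^n\to+\infty$. Combining these two facts gives $\|1_{y\geq-x^n}\Psi_n\|_{L^pL^r}\to0$, hence $\|N_n\|_{L^pL^r}\to0$ and $\|e_{\pm,n}\|_{L^pL^r}\to0$, as claimed. The crux is precisely this interplay: the time divergence $t^n\to\pm\infty$ forces $\Psi_n$ to converge to the fixed profile $\Psi_\infty$, while the spatial divergence $x^n\to\pm\infty$ then annihilates that fixed profile through the cutoff produced by \eqref{ADelta}.
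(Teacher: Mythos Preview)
Your proof is correct and follows the same overall strategy as the paper: split the error into a linear piece and a nonlinear piece, reduce the linear piece to \eqref{formfin1} via the scattering hypothesis \eqref{asympt}, and treat the nonlinear piece through formula \eqref{ADelta} after a density reduction. The paper's own proof is extremely terse; for the nonlinear term it simply states that after the time shift $t\mapsto t-t^n$ one obtains ``the estimate in \eqref{formfin2} with integral between $-t^n$ and $t$; its proof is similar to the one of \eqref{formfin2}.'' You in fact supply the detail the paper omits: in the proof of \eqref{formfin2} the relevant Duhamel profile $\int_0^t e^{-i(t-s)H_0}(F\star\rho_q)\,ds$ is a \emph{fixed} element of $L^pL^r$, so the spatial cutoff $1_{x\ge 0}$ combined with $x^n\to-\infty$ suffices, whereas here the lower limit $-t^n$ makes the profile $\Psi_n$ depend on $n$. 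Your observation that $\Psi_n\to\Psi_\infty$ in $L^pL^r$ (using the $L^1H^1$ tail and homogeneous Strichartz) followed by dominated convergence on the fixed $\Psi_\infty$ is exactly the right way to close this, and it is the only point where the two divergences $t^n\to\mp\infty$ and $|x^n|\to\infty$ genuinely interact. So your argument is both aligned with and more complete than the paper's sketch.
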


\begin{proof}
By combining \eqref{asympt} with the integral equation solved by $V_\pm(t, x)$,
it is sufficient to prove: 
$$\|e^{-i (t-t^n) H_q} \tau_{x^n} \varphi -e^{-i (t-t^n) H_0} 
\tau_{x^n}\varphi\|_{L^pL^r}
\overset{ n\rightarrow \infty}
\longrightarrow 0;$$
$$\|\int_0^t e^{-i(t-s)H_q} V_{\pm,n}(s) |V_{\pm,n}(s)|^\alpha ds
-\int_0^t e^{-i(t-s)H_0} V_{\pm,n}(s) |V_n(s)|^\alpha ds\|_{L^pL^r}
\overset{ n\rightarrow \infty } \longrightarrow 0.$$
The first one reduces to  \eqref{formfin1} by the change of variable $t-t^n\rightarrow t$. 
By the same change of variable the second one reduces to the estimate in \eqref{formfin2} with integral between $-t^n$ and $t$; its proof is similar to the one of \eqref{formfin2}.

\end{proof}

\section{Proof of Theorem \ref{main}}

In this section we prove the scattering result in Theorem \ref{main}. For this aim
we introduce the critical energy level defined as follows:
$$E_c=\sup \{ E>0\quad |\quad \forall \varphi\in H^1, \quad E(\varphi)< E
\Longrightarrow u(t,x)\in L^pL^r\},$$
where $u(t,x)$ denotes the unique solution to \eqref{NLS} with initial data $\varphi$.
Our aim is to show that $E_c=+\infty$, then we can conclude by Proposition \ref{giube}.
Notice also that due to Proposition \ref{giuti} we have $E_c>0$.\\
The main strategy is to prove that
if by the absurd $E_c<\infty$, then $E_c$ is achieved by a suitable
critical initial data $\varphi_c\in H^1$ whose corresponding solution
that does not scatter and moreover enjoys suitable compactness properties. The existence of such an object will be excluded via a rigidity argument by Proposition \ref{proprigid}. Therefore we shall conclude that $E_c=+\infty$. \\

\subsection{Existence and compactness of a minimal element.}

 \begin{prop}\label{princprop}
Assume that $E_c<+\infty$, then there exists a non trivial initial data $\varphi_c\in H^1$
such that  the corresponding solution $u_c(t,x)$ to \eqref{NLS}  has the property that $\{u_c(t,x), t\in \R\} $ is relatively compact in $H^1$. 
\end{prop}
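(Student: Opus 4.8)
The plan is to follow the standard concentration-compactness program of Kenig--Merle, adapted to the operator $H_q$ via the machinery built up earlier in the paper. Assuming $E_c<+\infty$, I would first extract a sequence of initial data $\varphi_n\in H^1$ whose energies satisfy $E(\varphi_n)\searrow E_c$ and whose solutions $u_n$ to \eqref{NLS} fail to scatter, i.e. $\|u_n\|_{L^pL^r}=+\infty$ (on at least one time half-line). By time translation one may center these near-critical solutions appropriately. The goal is to produce a limiting object $\varphi_c$ realizing $E_c$ whose orbit is precompact in $H^1$.

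The key tool is the profile decomposition of Theorem \ref{profile} applied to the bounded sequence $(\varphi_n)$ in $H^1$, yielding
\[
\varphi_n=\sum_{j=1}^{J} e^{it_j^nH_q}\tau_{x_j^n}\psi_j+R_n^J
\]
with the orthogonality relations \eqref{orth}, the energy decoupling \eqref{orthogenergy}, and the smallness of the remainder \eqref{reminder}. First I would argue that there is exactly one nontrivial profile. Indeed, using \eqref{orthogenergy} each profile carries energy at most $E_c$ in the limit; if two or more profiles were nontrivial, each would have strictly subcritical energy, so by definition of $E_c$ the associated nonlinear solutions would lie in $L^pL^r$. The nonlinear profile propositions (Propositions \ref{sfugg}, \ref{nonlprof}, \ref{minusinf}) convert each linear profile $e^{it_j^nH_q}\tau_{x_j^n}\psi_j$ into a nonlinear solution of \eqref{NLS} up to an $L^pL^r$-small error, handling the three regimes according to the limits of $t_j^n$ and $x_j^n$ in \eqref{localiz}. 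Summing these nonlinear profiles and invoking the perturbation result, Proposition \ref{pertub}, would then force $u_n\in L^pL^r$ with bounded norm, contradicting non-scattering. Hence only one profile, say $\psi_1$, survives, the remainder is negligible, and $E(\psi_1)=E_c$.

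With a single profile $\varphi_c$ (obtained from $\psi_1$ after undoing the time/space shift, using \eqref{hyppa} or \eqref{hyppb} to absorb the parameters into a genuine $H^1$ initial datum), I would define $u_c$ as the corresponding solution and verify it does not scatter, so $E(\varphi_c)=E_c$ exactly. To obtain precompactness of $\{u_c(t):t\in\R\}$, I would apply the profile decomposition again, this time to any sequence $u_c(\tau_n)$ along times $\tau_n$; a second profile would again split the energy strictly below $E_c$ on each side and force scattering by the same argument, which is impossible for a non-scattering critical orbit. Therefore the decomposition of $u_c(\tau_n)$ reduces to a single profile plus a remainder tending to zero in $H^1$, which is precisely relative compactness in $H^1$ modulo the symmetries; the spatial translations must then be controlled so the compactness holds without modulation, exploiting the repulsive delta potential to rule out spatial escape to infinity.

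The main obstacle will be the single-profile reduction in the presence of the delta interaction: the nonlinear profiles split into distinct cases depending on whether the shifts $x_j^n$ stay bounded or diverge and whether $t_j^n$ diverges, and one must patch together the outputs of Propositions \ref{sfugg}, \ref{nonlprof}, and \ref{minusinf} uniformly in $J$ and $n$ before invoking Proposition \ref{pertub}. The lack of translation invariance means the profile centered at a diverging $x_j^n$ effectively sees the free flow $e^{-itH_0}$, while a profile with bounded shift genuinely feels the potential; reconciling these two descriptions within a single superposition, and verifying the error terms are simultaneously small in $L^pL^r$, is the delicate point. A secondary difficulty is ensuring that the surviving profile can be normalized to a bona fide $H^1$ datum $\varphi_c$ rather than a moving frame, which is exactly where \eqref{hyppa} and \eqref{hyppb} are used to convert asymptotic translates into strong $H^1$ limits.
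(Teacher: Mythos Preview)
Your outline matches the paper's argument: profile decomposition of $(\varphi_n)$, energy decoupling \eqref{orthogenergy} to force a single profile, nonlinear profiles built case-by-case via Propositions \ref{sfugg}, \ref{nonlprof}, \ref{minusinf}, then Proposition \ref{pertub} to derive the contradiction, and a repetition of the whole scheme along $u_c(\tau_n)$ for precompactness.

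One point is misidentified and would leave a gap if carried out as written. Once you are down to a single profile $\psi_1$ with parameters $(t_1^n,x_1^n)$ and $E(\psi_1)=E_c$, you can no longer use the subcritical-energy argument, and neither \eqref{hyppa}/\eqref{hyppb} (which require $t^n$ bounded) nor the repulsive sign of $q$ is what excludes $|x_1^n|\to\infty$. The actual mechanism in the paper is that a profile escaping to spatial infinity sees, via Proposition \ref{sfugg}, the \emph{translation-invariant} equation \eqref{NLSconstant}, and the corresponding solution $U$ lies in $L^pL^r$ \emph{unconditionally} by Nakanishi's scattering theorem for the free defocusing NLS---no energy threshold is invoked. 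Proposition \ref{pertub} then forces $u_n\in L^pL^r$, a contradiction; hence $x_1^n=0$. The case $|t_1^n|\to\infty$ is handled separately (the paper defers to \cite{FaXiCa11}) using the half-line Strichartz control of the wave operators in Proposition \ref{nonlprof}. Thus $t_1^n=x_1^n=0$ and $\varphi_c=\psi_1$ directly, and the same scattering-of-the-free-problem input---not the sign of $q$---is what rules out spatial escape in the compactness step. You should replace the appeal to \eqref{hyppa}/\eqref{hyppb} and to the repulsive potential by this argument.
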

  
\begin{proof}
Since we are assuming $E_c<+\infty$ then we can select a sequence
$\varphi_{n}\in H^1$ such that $E(\varphi_{n})\overset{ n\rightarrow \infty}
\longrightarrow E_c$ and 
$u_n(t,x)\notin L^pL^r$ where $u_n(t,x)$ is the corresponding solution
to \eqref{NLS}. 
First we shall prove that under these hypotheses there exists a subsequence converging in $H^1$ to a function with energy $E_c$, whose nonlinear evolution by \eqref{NLS} does not scatter. For this purpose we use the profile decomposition for the $H^1$ uniformly bounded sequence $\varphi_{n}$:
\begin{equation}\label{decompbas}
\varphi_{n}=\sum_{j=1}^{J} e^{it_{j}^nH_q} \tau_{x_{j}^n}\psi_j + R_n^J
\end{equation}
where $\psi_1,..., \psi_J\in H^1$.
We fix $J$ large enough in a sense to be specified later. 
From the energy estimate \eqref{orthogenergy} we recall that
\begin{equation}\label{lll}E_c\geq  
\limsup_{n\rightarrow \infty}\sum_{j=1}^J E( e^{it_j^n H_q}\tau_{x_{j}^n} \psi_j).
 \end{equation}
Notice that in view of \eqref{localiz}, modulo rearrangement we can choose 
$0\leq J'\leq J^{''}\leq J^{'''}\leq J^{iv}\leq J$
such that:
\begin{align*} 
(t_{j}^n, x_j^n)=(0,0), \quad  \forall n,  & \quad   1\leq j\leq J',
\\\nonumber
t_{j}^n=0, \quad \forall n & \hbox{ and } \quad |x_j^n|\overset{ n\rightarrow \infty}
\longrightarrow \infty, \quad  J'+1\leq j\leq J''
\\\nonumber x_{j}^n=0, \quad \forall n &\hbox{ and }  \lim_{n\rightarrow \infty} t_{j}^n=+\infty, \quad  J''+1\leq j\leq J''''
\\\nonumber x_{j}^n=0, \quad \forall n &\hbox{ and } \lim_{n\rightarrow \infty} t_{j}^n=-\infty, \quad  J'''+1\leq j\leq J^{iv}
\\\nonumber \lim_{n\rightarrow \infty} |x_{j}^n|=+\infty &\hbox{ and } \lim_{n\rightarrow \infty} t_{j}^n=+\infty, \quad  J^{iv}+1\leq j\leq J^{v}
\\\nonumber\lim_{n\rightarrow \infty} |x_{j}^n|=+\infty &\hbox{ and } \lim_{n\rightarrow \infty} t_{j}^n=-\infty, \quad  J^{v}+1\leq j\leq J.
\end{align*}
Above we are assuming that if $a>b$ then there is no $j$ such that $a\leq j\leq b$. 
Notice that by the condition \eqref{orth} we have that $J'\in \{0, 1\}$.

Next we shall prove that in \eqref{decompbas} we have $J=1$
and the remainder can be assumed arbitrary small in Strichartz norm. To this purpose we shall suppose by absurd that $J>1$ and we can  consider two cases:\\
- $J'=1$; \\
- $J'=0$.
\\
We shall treat only the first case which is the most complicated one - the other case is a simplified version of the first case. 
Then we have $(t_1^n,x_1^n)=(0,0)$
and we also have (recall that we are assuming by the absurd $J>1$) 
by \eqref{lll} that 
$E(\psi_1)<E_c$.  
Hence by definition of $E_c$ we get the existence of $N(t,x)\in {\mathcal C}H^1\cap L^pL^r$ such that
$$N(t,x)=e^{-itH_q} \psi_1 -i\int_0^t e^{-i(t-s)H_q} (N(s) |N(s)|^\alpha) ds.$$
\noindent
For every $j$ such that $J'+1\leq j\leq J''$ 
we associate with the profile $\psi_j$ the function $U_j(t, x)\in {\mathcal C}H^1
\cap L^pL^r$ according with Proposition \ref{sfugg}.
In particular we introduce $U_{j, n}= U_j(t, x-x^n_j)$.
\\
\\
For every $j$ such that $J''+1\leq j\leq J'''$ 
we associate with the profile $\psi_j$ the function $W_{-,j}(t, x)\in {\mathcal C}H^1
\cap L^p_{\R^-}L^r$ according with Proposition \ref{nonlprof}.
We claim that $W_{-,j}(t, x)\in {\mathcal C}H^1 \cap L^p L^r$. In fact by \eqref{lll} we get $E(e^{t^n_j H_q} \psi_j)<E_c-\frac{\|\nabla\psi_{j'}\|_{L^2}^{2}}{4}$ for some $1\leq j'\neq j\leq J$, whose existence is insured by the hypothesis $J>1$.  Hence $E(W_{-,j}(t, x))= \lim_{n\rightarrow \infty } E(e^{t^n_j H_q} \psi_j)<E_c$, so $W_{-,j}$ scatters both forward and backwards in time and therefore $W_{-,j}(t, x)\in {\mathcal C}H^1 \cap L^p L^r$.
In the sequel we shall denote $W_{-,j, n}= W_{-,j}(t-t^n_j, x)$.
\\\\
For every $j$ such that $J'''+1\leq j\leq J^{iv}$ 
we introduce in a similar way following Proposition \ref{nonlprof} the nonlinear solutions 
$W_{+,j}(t, x)\in {\mathcal C}H^1\cap L^p L^r$ and also 
$W_{+,j, n}= W_{+,j}(t-t^n_j, x)$.
\\\\
For every $j$ such that $J^{iv}+1\leq j\leq J^v$ 
we associate with $\psi_j$  the function $V_{-,j}(t, x)\in {\mathcal C}H^1
\cap L^pL^r$ according with Proposition \ref{minusinf}
and also 
$V_{-,j, n}= V_{-,j}(t-t^n_j, x-x^n_j)$.
\\\\
Finally, for every $j$ such that $J^v+1\leq j\leq J$ 
we associate with $\psi_j$  the function $V_{+,j}(t, x)\in {\mathcal C}H^1
\cap L^pL^r$ according with Proposition \ref{minusinf}
and also 
$V_{+,j, n}= V_{+,j}(t-t^n_j, x-x^n_j)$.\\

Our aim is to apply the perturbative result of Proposition \ref{pertub} to $u_n$ and to $Z_{J,n}$ defined as follows:
$$Z_{J,n}= N+ \sum_{j=J'+1}^{J''} U_{j,n}
+\sum_{j=J''+1}^{J'''} W_{-,j,n} +\sum_{j=J'''+1}^{J^{iv}} W_{+,j,n}+
\sum_{j=J^{iv}+1}^{J^{v}} V_{-,j,n}+\sum_{j=J^{v}+1}^{J} V_{+,j,n}.
$$
Notice that by combining Propositions \ref{sfugg}, \ref{nonlprof} and \ref{minusinf}
the function $Z_{J,n}$ satisfies:
$$Z_{J,n}(t)= e^{-itH_q} (\varphi_n - R^J_n)
- i z_{J,n}
+ r_{J,n},
$$
where $\|r_{J,n}\|_{L^pL^q}\overset{n\rightarrow \infty}\longrightarrow 0$ and 
$$z_{J,n}(t, x)=\int_0^t e^{-i(t-s)H_q} ( N(s)|N(s)|^\alpha ) ds 
+ \sum_{j=J'+1}^{J''} \int_0^t e^{-i(t-s)H_q} ( U_{j,n}(s)|U_{j,n}(s)|^\alpha ) ds $$
$$+ \sum_{j=J''+1}^{J'''} \int_0^t e^{-i(t-s)H_q} ( W_{-,j,n}(s)|W_{-,j,n}(s)|^\alpha ) ds + \sum_{j=J'''+1}^{J^{iv}} \int_0^t e^{-i(t-s)H_q} ( W_{+,j,n}(s)|W_{+,j,n}(s)|^\alpha ) ds$$
$$+ \sum_{j=J^{iv}+1}^{J^v} \int_0^t e^{-i(t-s)H_q} ( V_{-,j,n}(s)|V_{-,j,n}(s)|^\alpha ) ds
+ \sum_{j=J^{v}+1}^{J} \int_0^t e^{-i(t-s)H_q} ( V_{+,j,n}(s)|V_{+,j,n}(s)|^\alpha ) ds.$$
We note that in Lemma 6.3 of \cite{FaXiCa11}, the estimates in $L^pL^r$ needed to apply the perturbative result are proved first in a space $L^\gamma L^\gamma$, and then concluded by an uniform bound in $L^\infty H^1$ of the approximate solutions. This last uniform bound, proved in Corollary 4.4 of \cite{FaXiCa11}, is more delicate in our case. Therefore we prove in the appendix estimates directly for the $L^pL^r$ norm. 
In view of Corollary \ref{appencor} in the appendix, based  on the orthogonality condition \eqref{orth}, we have
$$\|z_{J,n}(t,x)- \int_0^t e^{-i(t-s)H_q} (Z_{J, n} (s)|Z_{J,n}(s)|^\alpha ) ds\|_{L^pL^r} \overset{n\rightarrow \infty} \longrightarrow 0. 
$$
Summarizing we get:
$$Z_{J,n}(t)= e^{-itH_q} (\varphi_n - R^J_n)
- i\int_0^t e^{-i(t-s)H_q} (Z_{J, n} (s)|Z_{J,n}(s)|^\alpha ) ds + s_{J,n}$$
with $\|s_{J,n}\|_{L^pL^r}\overset{n\rightarrow \infty} \longrightarrow 0
$.
In order to apply the perturbative result of Proposition \ref{pertub}, we need also a bound on $\sup_J (\limsup_{n\rightarrow \infty} \|Z_{J,n}\|_{L^pL^r})$. Corollary \ref{appencorbis} ensures us that
$$\limsup_{n\rightarrow \infty} (\|Z_{J,n}\|_{L^pL^r})^{1+\alpha}\leq 2\|N\|_{L^pL^r}^{1+\alpha}+ 2\sum_{j=J'+1}^{J''} \|U_{j}\|_{L^pL^r}^{1+\alpha}$$
$$+2\sum_{j=J''+1}^{J'''} \|W_{-,j}\|_{L^pL^r}^{1+\alpha} +2\sum_{j=J'''+1}^{J^{iv}} \|W_{+,j}\|_{L^pL^r}^{1+\alpha}+
2\sum_{j=J^{iv}+1}^{J^{v}} \|V_{-,j}\|_{L^pL^r}^{1+\alpha}+2\sum_{j=J^{v}+1}^{J} \|V_{+,j}\|_{L^pL^r}^{1+\alpha}.$$
By using the defocusing conserved energy we obtain that the initial data of the wave operators $V_{\pm,j},W_{\pm,j}$ are upper-bounded in $H^1$ by $C\|\varphi_j\|_{H^1}$. In view of the orthogonality relation \eqref{orthog} we obtain the existence of $J_0$ such that for any $J\geq J_0$ we have
$$\|\varphi_j\|_{H^1}<\epsilon_0,$$
where $\epsilon_0$ is the universal constant in Proposition \ref{giuti}. Then by Proposition \ref{giuti}, the fact that $N,U_j,V_{\pm,j},W_{\pm,j}$ belong to $L^pL^r$ 
and by Corollary \ref{appencorbis} we get
\begin{equation}\label{imps}
\sup_J (\limsup_{n\rightarrow \infty} \|Z_{J,n}\|_{L^pL^r})=M<\infty.
\end{equation}
Due to \eqref{imps}
we are in position to apply Proposition \ref{pertub} to $Z_{J,n}$ 
provided that we choose $J$ large enough in such a way that
$\limsup_{n\rightarrow \infty} 
\|e^{-itH_q} R_n^J\|<\epsilon$, where $\epsilon=\epsilon(M)>0$ is the one given
in Proposition \ref{pertub}.
As a by-product we get that 
$Sc(\varphi_n)$ occurs for $n$ large, and hence we get a contradiction.
\\
\\
Therefore we have obtained that $J=1$ so 
\begin{equation}\label{decompbasone}
\varphi _{n}=e^{it_{1}^n H_q} \tau_{x_{1}^n}\psi_1 + R_n^1
\end{equation}
where $\psi_1\in H^1$ and 
$\limsup_{n\rightarrow \infty} \|e^{-it H_q} R_n^1\|_{L^p L^r}=0
$. Following the same argument as in Lemma 6.3 of \cite{FaXiCa11} 
one can deduce that $(t_1^n)_{n\in \N}$ is bounded and hence we can assume
$t_1^n=0$. Moreover arguing by the absurd and by combining Propositions \ref{sfugg}
and \ref{pertub} we get $x_1^n=0$ (otherwise $Sc(\varphi_n)$ occurs for $n$ large enough, and it is a contradiction).
We obtain then that $Sc(\psi_1)$ does not occurs, so 
$E(\psi_1)\geq E_c$. Equality occurs by the energy estimate \eqref{lll}, and in particular there is a subsequence of $(\varphi _{n})_{n\in \N}$ converging in $H^1$ to $\psi_1$. We have as a critical
element $\varphi_c=\psi_1$.
\\
\\
The compactness of the trajectory $u_c(t,x)\in H^1$
follows again by standard arguments. More precisely, for $(t^n)_{n\in \N}$ a sequence of times, $(u_c(t^n,x))_{n\in \N}$ satisfies the same hypothesis as $\varphi_n$ at the beggining of the proof above so we conclude that  there is a subsequence converging in $H^1$. 

\end{proof}

\subsection{Rigidity of compact solutions.}

We shall get now a constraint on the solution $u_c(t,x)$ constructed above.

\begin{prop}\label{proprigid}
Assume $u$ solves \eqref{NLS} with $q\geq 0$ and satisfies
the property:
\begin{equation}\label{compact}\{u(t,x), t\in\mathbb R\} \hbox{ is compact in } H^1.\end{equation}
Then $u=0$.
\end{prop}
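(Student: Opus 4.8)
The plan is to run a localized virial argument, exploiting the defocusing sign together with the repulsive ($q\ge 0$) delta interaction. First I would record the two standard consequences of the compactness hypothesis \eqref{compact}. On one hand, precompactness of $\{u(t)\}_{t\in\R}$ in $H^1$ forces tightness: for every $\epsilon>0$ there is $R_0$ with $\sup_{t}\int_{|x|>R_0}(|u|^2+|\de_x u|^2+|u|^{\al+2})\,dx<\epsilon$ (otherwise a sequence $u(t_n)$ would fail to have an $H^1$-convergent subsequence). On the other hand, if $u\not\equiv 0$ then conservation of mass gives $\|u(t)\|_{L^2}^2=M_0>0$ for all $t$, and I claim the full-line quantity $2\|\de_x u(t)\|_{L^2}^2+2q|u(t,0)|^2+\tfrac{2\al}{\al+2}\|u(t)\|_{L^{\al+2}}^{\al+2}$ is bounded below by some $\delta_0>0$ uniformly in $t$. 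Indeed, were the infimum zero, a sequence $t_n$ would make the kinetic term vanish; extracting an $H^1$-limit $w$ by compactness forces $\de_x w=0$, hence $w=0$, contradicting $\|w\|_{L^2}^2=M_0>0$.

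Next I would introduce the truncated virial. Fix an even cut-off $\phi\in C^\infty(\R)$ with $\phi(y)=y^2$ for $|y|\le 1$, $\phi$ constant for $|y|\ge 2$ and $\phi',\phi'',\phi''''$ bounded, set $\phi_R(x)=R^2\phi(x/R)$ and
\[
V_R(t)=\int_\R \phi_R(x)\,|u(t,x)|^2\,dx .
\]
Differentiating in time and using the equation \eqref{NLS} one gets $V_R'(t)=\int_\R \phi_R'(x)\,\Im(\bar u\,\de_x u)\,dx$, so that $|V_R'(t)|\le \|\phi_R'\|_{L^\infty}\|u\|_{L^2}\|\de_x u\|_{L^2}\le C R$ is bounded uniformly in $t$. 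The heart of the matter is the second derivative, for which I expect the identity
\[
V_R''(t)=\int_\R \phi_R''\,|\de_x u|^2\,dx+2q\,|u(t,0)|^2+\frac{\al}{\al+2}\int_\R \phi_R''\,|u|^{\al+2}\,dx-\frac14\int_\R \phi_R''''\,|u|^2\,dx ,
\]
where the delta contribution $2q|u(t,0)|^2$ comes precisely from the jump condition $\de_x u(0^+)-\de_x u(0^-)=2q\,u(0)$ in $\mathcal D(H_q)$ and is $\ge 0$ since $q\ge 0$ (this is where repulsivity is used). Since $\phi_R''=2$ on $|x|\le R$, the first and third terms dominate the contribution $2\|\de_x u\|_{L^2(|x|\le R)}^2+\tfrac{2\al}{\al+2}\|u\|_{L^{\al+2}(|x|\le R)}^{\al+2}$, while the remaining pieces are supported in $R\le|x|\le 2R$ and the biharmonic term is $O(R^{-2}\|u\|_{L^2}^2)$. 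By tightness these error terms are smaller than $\delta_0/2$ once $R$ is large, so combined with the lower bound above I get $V_R''(t)\ge \delta_0/2>0$ for all $t$.

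The conclusion is then immediate: integrating, $V_R'(T)-V_R'(-T)=\int_{-T}^{T}V_R''(s)\,ds\ge \tfrac{\delta_0}{2}\,T\to+\infty$ as $T\to\infty$, contradicting the uniform bound $|V_R'(t)|\le CR$. Hence $u\equiv 0$. The step I expect to be the main obstacle is the rigorous derivation of the formula for $V_R''(t)$: because $u(t)$ lies a priori only in the energy space $H^1$ and the delta interaction makes $\de_x u$ discontinuous at the origin, the integrations by parts must be performed on $(-\infty,0)$ and $(0,\infty)$ separately, with the boundary terms at $0^\pm$ recombined through the continuity of $u$ and the jump of $\de_x u$; this is what produces the clean $2q|u(t,0)|^2$ term. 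To legitimize differentiating $V_R$ twice and all of these manipulations I would first establish the identity for data in $\mathcal D(H_q)$ (or smooth approximations thereof) and then pass to the limit using the continuity of the flow in $H^1$ together with the conserved quantities.
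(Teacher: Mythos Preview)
Your proposal is correct and follows essentially the same localized virial argument as the paper: the paper states the virial identity as a lemma (with weight $\lambda$ satisfying $\lambda'(0)=0$, yielding the term $q\lambda''(0)|u(t,0)|^2$, which equals your $2q|u(t,0)|^2$ since $\phi_R''(0)=2$), then takes $\lambda(x)=x^2\chi(|x|/R)$, uses compactness both for the uniform lower bound $\delta$ and for the smallness of the tails, and reaches the same contradiction by integrating in time against the $O(R)$ bound on the first derivative. Your write-up is in fact slightly more detailed, in that you spell out the extraction argument giving $\delta_0>0$ and flag the approximation step needed to justify the identity for $H^1$ data across the jump at $x=0$.
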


\begin{proof} 
We start with the following virial computation. 

\begin{lem}\label{propNM}
Let $u(t, x)\in {\mathcal C} H^1 $ be a global solution to \eqref{NLS} and $\lambda(x)$ a weight such that $\partial_x \lambda(0)=0$. Then
\begin{align}\label{virial}
\frac {d^2}{dt^2} \int \lambda |u|^2 dx=\frac {d}{dt} (\Im  \int \partial_x \lambda
\partial_x u  \bar u dx)
 \\\nonumber 
= \int \lambda'' |u'|^2 dx
- \frac 14 \int \lambda^{iv} |u|^2 dx
+q \lambda^{''}(0)  |u(t,0)|^2 +\frac \alpha{\alpha+2} 
\int \lambda'' |u|^{\alpha+2}dx.
\end{align}

\end{lem}

\begin{proof}

We shall use the notations $u'=\partial_x u$, $\lambda'=\partial_x \lambda$,
$u_t=\partial_t u$ .
We compute, for a weight in space $\lambda(x)$:
\begin{align*}\frac d{dt} \int \lambda |u|^2 dx= 2\Re \int \lambda u_t \bar u dx\\
=2\Re \int \lambda (\frac i 2 u'' \bar u  - i u \bar u|u|^\alpha)dx +2\Re \lambda(0) i |u(t,0)|^2
=- \Re \int i \lambda'  u' \bar u  dx.
\end{align*}
Next we compute, due to the previous identity:
\begin{align}\label{vir}\frac {d^2}{dt^2} \int \lambda |u|^2 dx=
\Im \int \lambda' (u' \bar u)_t dx
\end{align}
We get, by using integrations by parts and $\lambda'(0)=0$:
\begin{align}\label{vir2}
\Im \int \lambda' (u' \bar u)_t dx&=
\Im \int \lambda' u' \bar u_t dx+ \Im\int \lambda' u_t' \bar u dx
\\\nonumber
&=2\Im \int \lambda' u' \bar u_t dx
- \Im\int \lambda'' u_t \bar u dx 
\\\nonumber =-\Re \int \lambda' u'  \bar u'' dx &+ 2\Re\int \lambda' u'  \bar u|u|^\alpha dx 
 - \frac 12 \Re \int \lambda'' u''\bar u
dx + \Re \int \lambda'' (u|u|^\alpha)\bar udx+q \lambda^{''}(0)  |u(t,0)|^2.
\end{align}
Next notice that
$$-\Re \int \lambda' u'  \bar u'' dx
- \frac 12 \Re \int \lambda'' u''\bar u
dx$$$$=\frac 12 \int \lambda'' |u'|^2 dx+ 
+ \frac 12 \Re \int \lambda''' u'\bar u dx + \frac 12 \Re \int \lambda'' |u'|^2 dx
$$
$$=  \int \lambda'' |u'|^2 dx
- \frac 14 \int \lambda^{iv} |u|^2 dx
,$$
and
$$2\Re\int \lambda' u'  \bar u|u|^\alpha dx 
+ \Re \int \lambda'' (u|u|^\alpha)\bar udx
$$
$$= -\frac{2}{\alpha+2} \int \lambda'' (|u|^{\alpha +2})
+ \int \lambda'' |u|^{\alpha+2}dx= \frac \alpha{\alpha+2} 
\int \lambda'' |u|^{\alpha+2}dx.$$
We conclude by combining the computations above with \eqref{vir} and \eqref{vir2}. 

\end{proof}

We continue the proof of Proposition \ref{proprigid} and we assume by the
absurd the existence of a non-trivial solution $u(t,x)$ that satisfies \eqref{compact}.
We fix a cut-off $\chi$ vanishing outside $B(0,2)$ and equal to one on $B(0,1)$. Let $R>0$ to be chosen later. By using Lemma \ref{propNM} for $\lambda(x)=x^2\chi(\frac {|x|}R)$  then we get:
\begin{align*}
\frac {d}{dt} (\Im  \int (x^2\chi(\frac {|x|}R))' u' \bar u dx)
\\\nonumber
\geq  \int_{|x|<R} |u'|^2dx+\frac{\alpha}{\alpha+2}\int_{|x|<R} |u|^{\alpha+2} dx
-C\int_{|x|>R} (|u|^2+ |u'|^2+ |u|^{\alpha+2}) dx
\\\nonumber
\geq \delta-C\int_{|x|>R} (|u|^2+ |u'|^2+ |u|^{\alpha+2}) dx,
\end{align*}
for some $\delta>0$. Notice that the existence of a positive $\delta$ comes from
the fact that $u(t,x)$ is assumed to be non trivial
and moreover satisfies \eqref{compact}. 
By integrating from $0$ to $t$ and using Cauchy-Schwartz inequality,
then we obtain 
$$C(R)\|u\|_{L^\infty H^1}\geq t\delta-C\int_0^t\int_{|x|>R}
(|u|^2+ |u'|^2+ |u|^{\alpha+2}) dx.$$
By using again the compacteness hypothesis \eqref{compact}
then we get a contradiction as $t$ goes to infinity, provided $R>0$ is large enough. 

\end{proof}

As a conclusion, the existence of the solution $u_c(t,x)$ constructed in Proposition \ref{princprop} is constrained by  Proposition \ref{proprigid} to be the null function. Since $E(u_c)=E_c>0$ we get a constradiction, so the hypothesis $E_c<+\infty$ made in Proposition \ref{proprigid} cannot hold. Therefore we conclude that $E_c=+\infty$, so all solutions of \eqref{NLS} scatter. 

\section{Appendix}

\begin{prop}\label{aPPe}
Let $W_i(t, x)\in \mathcal CH^1\cap L^p L^r$ for $i=1,2$ 
be space-time functions and
$(t_n, s_n, x_n, y_n)_{n\in \N}$ be sequences of real numbers.
Assume that $|t_n-s_n|+ |x_n - y_n|\overset{n\rightarrow \infty} 
\longrightarrow +\infty$
then we get
$$\||W_1(t-t_n, x-x_n)|^\alpha \times |W_2(t-s_n, x-y_n)|\|_{L^{q'}L^{r'}}
\overset{n\rightarrow \infty} 
\longrightarrow 0.$$
\end{prop}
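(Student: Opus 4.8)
The plan is to reduce, by a density argument, to profiles that are continuous and compactly supported in space-time, and then to exploit the separation hypothesis $|t_n-s_n|+|x_n-y_n|\to\infty$ to make the supports of the two translated factors disjoint. The whole scheme rests on one unconditional bound,
$$\||W_1|^\alpha|W_2|\|_{L^{q'}L^{r'}}\le C\,\|W_1\|_{L^pL^r}^\alpha\,\|W_2\|_{L^pL^r},$$
which follows from H\"older's inequality once one checks the exponent identities $\frac{\alpha+1}{r}=\frac1{r'}$ and $\frac{\alpha+1}{p}=\frac1{q'}$. First I would verify these: since $r=\alpha+2$ we have $r'=\frac{\alpha+2}{\alpha+1}$, hence $\frac{\alpha+1}{r}=\frac1{r'}$, and a direct computation with the given value of $q$ gives $q'=\frac{p}{\alpha+1}$, i.e. $\frac{\alpha+1}{p}=\frac1{q'}$. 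Distributing the exponent $r$ in $x$ among the $\alpha$ copies of $W_1$ and the single copy of $W_2$, then doing the same in $t$ with $p$, produces the claimed bound.

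Next I would upgrade this to a local Lipschitz estimate: using the usual splitting $|W_1|^\alpha|W_2|-|\tilde W_1|^\alpha|\tilde W_2|=(|W_1|^\alpha-|\tilde W_1|^\alpha)|W_2|+|\tilde W_1|^\alpha(|W_2|-|\tilde W_2|)$ together with the elementary inequality $\big||a|^\alpha-|b|^\alpha\big|\le C(|a|^{\alpha-1}+|b|^{\alpha-1})|a-b|$, valid because $\alpha>1$, the same H\"older bookkeeping shows that $(W_1,W_2)\mapsto|W_1|^\alpha|W_2|$ is Lipschitz on bounded sets from $L^pL^r\times L^pL^r$ to $L^{q'}L^{r'}$. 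Since $p,r<\infty$, the space $C_c(\R_t\times\R_x)$ is dense in $L^pL^r$, so I may pick compactly supported $\tilde W_i$ with $\|W_i-\tilde W_i\|_{L^pL^r}$ arbitrarily small. The key observation making this reduction legitimate is that the $L^pL^r$ norm is invariant under space-time translations, so the approximation errors $\|W_i(\cdot-t_n,\cdot-x_n)-\tilde W_i(\cdot-t_n,\cdot-x_n)\|_{L^pL^r}$ do not depend on $n$; consequently the Lipschitz estimate controls the difference between the target quantity and its $\tilde W_i$-analogue uniformly in $n$.

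It then remains to treat the compactly supported profiles, where the statement becomes geometric. Choosing $T,L>0$ with $\mathrm{supp}\,\tilde W_i\subset[-T,T]\times[-L,L]$, the factor $\tilde W_1(t-t_n,x-x_n)$ lives in the space-time rectangle centered at $(t_n,x_n)$ and $\tilde W_2(t-s_n,x-y_n)$ in the one centered at $(s_n,y_n)$, each of half-width $T$ in $t$ and $L$ in $x$. If both $|t_n-s_n|\le 2T$ and $|x_n-y_n|\le 2L$ held along a subsequence, then $|t_n-s_n|+|x_n-y_n|$ would stay bounded there, contradicting the hypothesis; hence for all large $n$ the two rectangles have disjoint time-projections or disjoint space-projections, so the product $|\tilde W_1(t-t_n,x-x_n)|^\alpha|\tilde W_2(t-s_n,x-y_n)|$ is identically zero and its norm vanishes exactly. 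Letting the approximation parameter go to zero concludes.

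The step I expect to be the main obstacle is securing the uniformity in $n$ of the density reduction: the danger is that the approximation error might degrade as the translation parameters diverge, and the reason it does not is precisely the translation invariance of $L^pL^r$. The only other point demanding care is the handling of the $\alpha$-th power in the Lipschitz bound, which the elementary inequality above resolves; everything beyond that is routine.
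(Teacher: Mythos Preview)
Your proof is correct, and it takes a genuinely different route from the paper's. The paper splits into two cases: when $|t_n-s_n|\to\infty$ it applies H\"older in $x$ to reduce to a one-dimensional orthogonality statement for the scalar functions $f_i(t)=\|W_i(t,\cdot)\|_{L^r}\in L^p_t$; when $|t_n-s_n|$ stays bounded (so $|x_n-y_n|\to\infty$) it truncates to $|t|<T$, uses the hypothesis $W_2\in\mathcal C H^1$ to get that $\{W_2(t+t_n-s_n,\cdot)\}_n$ is relatively compact in $L^r$ for each fixed $t$, and concludes by dominated convergence in $t$. Your approach bypasses both cases by a single density reduction to $C_c(\R_t\times\R_x)$, exploiting translation invariance of the $L^pL^r$ norm to make the approximation uniform in $n$, after which the result is purely combinatorial (disjoint supports). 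What you gain is simplicity and a slight strengthening: your argument never uses the $\mathcal C H^1$ assumption, only $W_i\in L^pL^r$. What the paper's argument buys is that it stays closer to the actual structure of the objects in the application and avoids invoking density in the mixed Lebesgue space $L^pL^r$ (which is standard for $p,r<\infty$, but is the one step you should state explicitly).
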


\begin{proof}
First assume that $|t_n-s_n|\overset{n\rightarrow \infty}
\longrightarrow \infty$.
Then in this case we get:
 $$\||W_1(t-t_n, x-x_n)|^\alpha \times |W_2(t-s_n, x-y_n)|\|_{L^{q'}L^{r'}}
 $$$$\leq \big \| \| W_1(t-t_n, x-x_n)\|_{L^r_x}^\alpha \times 
 \|W_2(t-s_n, x-y_n)\|_{L^r_x}\big  \|_{L^{q'}_t}.$$
The conclusion follows by the following elementary fact:
$$|t_n-s_n|\overset{n\rightarrow \infty}
\longrightarrow +\infty\Longrightarrow 
\||f_1(t-t_n)|^\alpha \times |f_2(t-s_n)|\|_{L^{q'}_t}
\overset{n\rightarrow \infty}
\longrightarrow 0,$$
where $f_i(t)=\|W_i(t, x)\|_{L^r}\in L^p_t$, $i=1,2$.\\
Next we assume that $(|t_n-s_n|)_{n\in \N}$ is bounded
and $|x_n-y_n|\overset{n\rightarrow \infty}
\longrightarrow \infty$.
First notice that
we have
$$\||W_1(t, x-x_n)|^\alpha \times |W_2(t+t_n -s_n, x-y_n)|\|_{L^{q'}_{|t|>T}L^{r'}}
$$$$\leq \|W_1(t,x)\| _{L^{p}_{|t|>T}L^{r}}^\alpha \|W_2(t+t_n-s_n,x)\| _{L^{p}_{|t|>T}L^{r}}
\overset{T\rightarrow \infty}
\longrightarrow 0.
$$
Hence it is sufficient to prove
\begin{equation}\label{LeBe}
\||W_1(t, x-x_n)|^\alpha \times |W_2(t+t_n -s_n, x-y_n)|\|_{L^{q'}_{|t|<T}L^{r'}}
\overset{n\rightarrow \infty}
\longrightarrow 0
\end{equation}
for every fixed $T$.
We notice that 
for every fixed $t$ we get:
\begin{align*}\||W_1(t, x-x_n)|^\alpha \times |W_2(t+t_n-s_n, x-y_n)|\|_{L^{r'}_x}
\\\nonumber=\||W_1(t, x)|^\alpha \times |W_2(t+t_n-s_n, x+x_n-y_n)|\|_{L^{r'}_x}
\overset{n\rightarrow \infty} \longrightarrow 0\end{align*}
where we used at the last step the following facts
(below we use the property $W_i(t, x)\in {\mathcal C} H^1$ to give a meaning to
the function 
$W_i(t, x)$ for every fixed $t$):
$$|W_1(t, x)|^\alpha\in L^{\frac r \alpha}, \quad \forall t$$
$$\{W_2(t +t_n-s_n, x), \quad n\in \N\} \hbox { is compact in } L^{r}, \quad \forall t$$
and $$|x_n-y_n|\overset{n\rightarrow \infty}
\longrightarrow \infty.$$
Indeed the first property above follows by the Sobolev embedding 
$H^1\subset L^r$, and second one follows from the fact that 
$(|t_n-s_n|)_{n\in \N}$ is bounded and the function
$\R \ni t\rightarrow W_2(t, x)\in H^1$ is continuous.
On the other hand we have
$$
\sup_{t\in (-T, T)}\||W_1(t, x)|^\alpha \times |W_2(t+t_n-s_n, x+x_n-y_n)|\|_{L^{r'}_x}
$$$$\leq \sup_{t\in (-T, T)} \|W_1(t, x)\|_{L^r_x}^\alpha 
\times \|W_2( (t+t_n-s_n, x)\|_{L^r_x}<\infty,
$$
where we used again the Sobolev embedding 
$H^1\subset L^r$ 
and the assumption $u(t, x)\in {\mathcal C}H^1$.
We deduce \eqref{LeBe} by the Lebesgue dominated convergence theorem.


\end{proof}

As a consequence we get the following corollary.
\begin{cor}\label{appencor}
Let $W_j(t, x)\in L^p L^r\cap {\mathcal C} H^1$, $j=1,..., N$ be a 
family of space-time functions and let
$(t_j^n, x_j^n)_{n\in \N}$, $j=1,...,N$ be sequences of real numbers that satisfy the ortogonality condition:
$$|t_j^n- t_k^n|+ |x_j^n- x_k^n|\overset{n\rightarrow \infty} \longrightarrow +\infty, \quad j\neq k.$$
Then we have
$$\|\sum_{j=1}^N W_{j, n}(t, x)
|W_{j, n}(t, x)|^\alpha  - (\sum_{j=1}^N W_{j, n}(t, x))
(|\sum_{j=1}^N W_{j, n}(t, x)|^\alpha )
 \|_{L^{q'}L^{r'}}\overset{n\rightarrow \infty} \longrightarrow 0,$$ where
$W_{j, n}(t, x)= W_j(t-t_j^n, x-x_j^n)$.
\end{cor}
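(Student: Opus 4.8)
The plan is to reduce the statement to a single purely algebraic pointwise inequality for the power nonlinearity $F(z)=z|z|^\alpha$ and then to invoke Proposition \ref{aPPe} one cross term at a time. The key algebraic fact I would establish first is that, for any complex numbers $a_1,\dots,a_N$,
$$\left|F\Big(\sum_{j=1}^N a_j\Big)-\sum_{j=1}^N F(a_j)\right|\leq C_N\sum_{j\neq k}|a_j|\,|a_k|^\alpha,$$
with $C_N$ depending only on $N$ and $\alpha$. I would prove this by induction on $N$. The base case $N=2$ follows from the elementary bound $|F(z)-F(w)|\leq C(|z|^\alpha+|w|^\alpha)|z-w|$ (valid for $\alpha>0$, via the fundamental theorem of calculus applied to $F$) taken with $z=a_1+a_2$, $w=a_1$, combined with the trivial identity $|F(a_2)|=|a_2|^{\alpha+1}$, after splitting into the cases $|a_1|\geq|a_2|$ and $|a_1|<|a_2|$. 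The inductive step writes $\sum_{j=1}^N a_j=\big(\sum_{j=1}^{N-1}a_j\big)+a_N$, applies the $N=2$ estimate together with the inequality $\big(\sum_{j<N}|a_j|\big)^\alpha\leq C_N\sum_{j<N}|a_j|^\alpha$ (legitimate since $N$ is fixed), and then the induction hypothesis.

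With this in hand, I would apply the inequality pointwise in $(t,x)$ with $a_j=W_{j,n}(t,x)$, which gives
$$\left|\sum_{j=1}^N W_{j,n}|W_{j,n}|^\alpha-\Big(\sum_{j=1}^N W_{j,n}\Big)\Big|\sum_{j=1}^N W_{j,n}\Big|^\alpha\right|\leq C_N\sum_{j\neq k}|W_{k,n}|^\alpha\,|W_{j,n}|$$
everywhere. Taking the $L^{q'}L^{r'}$ norm and using the triangle inequality reduces the claim to showing that each cross term $\||W_{k,n}|^\alpha\,|W_{j,n}|\|_{L^{q'}L^{r'}}$ tends to $0$ for $j\neq k$. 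But each such term is exactly of the form treated in Proposition \ref{aPPe}, taking $W_1=W_k$, $W_2=W_j$ and shift parameters $(t^n_k,x^n_k)$, $(t^n_j,x^n_j)$; since the orthogonality hypothesis gives $|t^n_j-t^n_k|+|x^n_j-x^n_k|\to\infty$, Proposition \ref{aPPe} yields convergence to $0$. As the sum over $j\neq k$ is finite (because $N$ is fixed), the whole error tends to $0$, which is the assertion.

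The only genuinely delicate point is the pointwise inequality, and within it the base case $N=2$: one must bound the mixed quantity $|F(a_1+a_2)-F(a_1)-F(a_2)|$ purely by the cross products $|a_1|\,|a_2|^\alpha+|a_2|\,|a_1|^\alpha$, with no surviving diagonal term. The case split according to which of $|a_1|,|a_2|$ is larger is precisely what absorbs the diagonal contribution $|a_2|^{\alpha+1}$ into $|a_1|\,|a_2|^\alpha$ when $|a_1|\geq|a_2|$ (and symmetrically). Everything else is routine, using only that $\alpha>0$, that $N$ is a fixed finite integer, and the finiteness of the $L^pL^r$ and $\mathcal{C}H^1$ norms that underlies Proposition \ref{aPPe}.
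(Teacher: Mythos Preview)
Your proposal is correct and follows exactly the same route as the paper: reduce to the pointwise inequality
\[
\Big|\sum_{j=1}^N a_j|a_j|^\alpha-\Big(\sum_{j=1}^N a_j\Big)\Big|\sum_{j=1}^N a_j\Big|^\alpha\Big|\leq C(N,\alpha)\sum_{j\neq k}|a_j|\,|a_k|^\alpha
\]
and then apply Proposition~\ref{aPPe} to each cross term. The paper simply states this inequality as elementary, whereas you supply a clean inductive proof of it; otherwise the two arguments are identical.
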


\begin{proof}
It follows by Proposition \ref{aPPe} in conjunction
with the following elementary inequality
\begin{equation}\label{elementary}\big |\sum_{j=1}^N a_j |a_j|^\alpha -  (\sum_{j=1}^N a_j)
| \sum_{j=1}^N a_j|^\alpha \big |
\leq C(N,\alpha) \sum_{j\neq k} |a_j| |a_k|^\alpha, \quad \forall a_1,..., a_N\in \C.
\end{equation}
\end{proof}

\begin{cor}\label{appencorbis}
Let $W_j(t, x)\in L^p L^r\cap {\mathcal C} H^1$, $j=1,..., N$ be a 
family of space-time functions and let
$(t_j^n, x_j^n)_{n\in \N}$, $j=1,...,N$ be sequences of real numbers that satisfy the ortogonality condition:
$$|t_j^n- t_k^n|+ |x_j^n- x_k^n|\overset{n\rightarrow \infty} \longrightarrow +\infty, \quad j\neq k.$$
Then we have
$$\limsup_{n\rightarrow \infty}(\|\sum_{j=1}^N W_{j, n}(t, x)\|_{L^pL^r})^{1+\alpha} \leq 2\sum_{j=1}^N \|W_{j}\|_{L^p L^r}^{1+\alpha}$$ where
$W_{j, n}(t, x)= W_j(t-t_j^n, x-x_j^n)$. 
\end{cor}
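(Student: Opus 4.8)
The plan is to deduce this mixed-norm bound from the asymptotic orthogonality of the \emph{nonlinear} terms already established in Corollary \ref{appencor}, by exploiting an exact matching of the Strichartz exponents. The key arithmetic observation is that $r=\alpha+2$ forces $(1+\alpha)r'=r$, while a direct computation from $p=\frac{2\alpha(\alpha+2)}{\alpha+4}$ and $q=\frac{2\alpha(\alpha+2)}{\alpha^2-\alpha-4}$ gives $q'=\frac{2\alpha(\alpha+2)}{(\alpha+1)(\alpha+4)}$, hence $(1+\alpha)q'=p$. Consequently, for any space-time function $g\in L^pL^r$ one has, for a.e.\ $t$, the identity $\|g(t)\,|g(t)|^\alpha\|_{L^{r'}_x}=\|g(t)\|_{L^r_x}^{1+\alpha}$ (this uses $(1+\alpha)r'=r$), and integrating in time (using $(1+\alpha)q'=p$) yields the \emph{exact} identity
$$\|g\|_{L^pL^r}^{1+\alpha}=\big\|\,g|g|^\alpha\,\big\|_{L^{q'}L^{r'}}.$$
This is the algebraic heart of the statement: raising the mixed norm to the power $1+\alpha$ converts it precisely into the $L^{q'}L^{r'}$ norm of the nonlinearity, which is exactly the norm controlled by Corollary \ref{appencor}.

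Next I would apply this identity with $g=F_n:=\sum_{j=1}^N W_{j,n}$. By Corollary \ref{appencor}, the difference between $F_n|F_n|^\alpha$ and $\sum_{j=1}^N W_{j,n}|W_{j,n}|^\alpha$ tends to $0$ in $L^{q'}L^{r'}$, so using the identity and then the triangle inequality I obtain
$$\|F_n\|_{L^pL^r}^{1+\alpha}\leq \Big\|\sum_{j=1}^N W_{j,n}|W_{j,n}|^\alpha\Big\|_{L^{q'}L^{r'}}+o(1)\leq \sum_{j=1}^N \big\|W_{j,n}|W_{j,n}|^\alpha\big\|_{L^{q'}L^{r'}}+o(1).$$
Applying the exponent identity in reverse to each individual term, and using that the $L^pL^r$ norm is invariant under the space-time translation $(t,x)\mapsto(t-t_j^n,x-x_j^n)$, each summand equals $\|W_{j,n}\|_{L^pL^r}^{1+\alpha}=\|W_j\|_{L^pL^r}^{1+\alpha}$. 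Taking $\limsup_{n\to\infty}$ then gives $\limsup_n\|F_n\|_{L^pL^r}^{1+\alpha}\leq\sum_{j=1}^N\|W_j\|_{L^pL^r}^{1+\alpha}$, which is in fact stronger than the asserted bound with constant $2$.

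I expect no genuine obstacle here once the exponent bookkeeping is in place; the only step requiring care is checking $(1+\alpha)q'=p$, since everything downstream relies on that single identity. The generous constant $2$ (in place of the $1$ the above argument actually delivers) is harmless: it simply absorbs the vanishing error from Corollary \ref{appencor} and allows the bound to be stated uniformly for all large $n$, which is all that is needed when this corollary is invoked in the proof of Proposition \ref{princprop}.
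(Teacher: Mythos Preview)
Your argument is correct and is essentially the same as the paper's: both rest on the exponent identity $(1+\alpha)r'=r$, $(1+\alpha)q'=p$ to rewrite $\|F_n\|_{L^pL^r}^{1+\alpha}$ as an $L^{q'}L^{r'}$ norm, and then kill the cross terms via the orthogonality. The only cosmetic difference is that the paper works with $(\sum_j |W_{j,n}|)^{1+\alpha}$ and invokes Proposition~\ref{aPPe} together with \eqref{elementary} directly, whereas you package those two ingredients through Corollary~\ref{appencor}; your observation that the argument actually yields the constant $1$ rather than $2$ is also correct.
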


\begin{proof}
We have
$$\|\sum_{j=1}^N W_{j, n}(t, x)\|_{L^pL^r}\leq 
(\| (\sum_{j=1}^N |W_{j, n}(t, x)|)^{1+\alpha} \|_{L^{q'}L^{r'}})^\frac 1 {1+\alpha}
$$
$$\leq (\| (\sum_{j=1}^N |W_{j, n}(t, x)|)^{1+\alpha} - 
\sum_{j=1}^N |W_{j, n}(t, x)|^{1+\alpha}\|_{L^{q'}L^{r'}}
+\|\sum_{j=1}^N |W_{j, n}(t, x)|^{1+\alpha}\|_{L^{q'}L^{r'}}
)^\frac 1{1+\alpha}.$$
The conclusion follows by combining \eqref{elementary} with Proposition \ref{aPPe}. 
\end{proof}

\bibliographystyle{alpha} 
\bibliography{ScatteringDelta}

\newcommand{\etalchar}[1]{$^{#1}$}
\begin{thebibliography}{AGHKH05}

\bibitem[AGHKH05]{AlCo}
Sergio Albeverio, Fritz Gesztesy, Raphael H{\o}egh-Krohn, and Helge Holden.
\newblock {\em Solvable models in quantum mechanics}.
\newblock AMS Chelsea Publishing, Providence, RI, second edition, 2005.
\newblock With an appendix by Pavel Exner.

\bibitem[AN09]{AdNo09}
Riccardo Adami and Diego Noja.
\newblock Existence of dynamics for a 1{D} {NLS} equation perturbed with a
  generalized point defect.
\newblock {\em J. Phys. A}, 42(49):495302, 19, 2009.

\bibitem[ANV13]{AdamiNojaVisciglia}
Riccardo Adami, Diego Noja, and Nicola Visciglia.
\newblock Constrained energy minimization and ground states for {NLS} with
  point defects.
\newblock {\em Discrete Contin. Dyn. Syst. Ser. B}, 18(5):1155--1188, 2013.

\bibitem[AS05]{AdSa05}
Ricardo Adami and Andrea Sacchetti.
\newblock The transition from diffusion to blow-up for a nonlinear
  {S}chr{\"o}dinger equation in dimension 1.
\newblock {\em J. Phys. A}, 38(39):8379--8392, 2005.

\bibitem[BL83]{BrezisLieb}
Ha{\"{\i}}m Br{\'e}zis and Elliott Lieb.
\newblock A relation between pointwise convergence of functions and convergence
  of functionals.
\newblock {\em Proc. Amer. Math. Soc.}, 88(3):486--490, 1983.

\bibitem[Car14]{Ca14}
R\'emi Carles.
\newblock Sharp weights in the cauchy problem for nonlinear {S}chr\"odinger
  equations with potential.
\newblock {\em arXiv:1409.5759}, 2014.

\bibitem[CGT09]{CoGrTz09}
James Colliander, Manoussos Grillakis, and Nikolaos Tzirakis.
\newblock Tensor products and correlation estimates with applications to
  nonlinear {S}chr\"odinger equations.
\newblock {\em Comm. Pure Appl. Math.}, 62(7):920--968, 2009.

\bibitem[CGV14]{CuGeVi14}
Scipio Cuccagna, Vladimir Georgiev, and Nicola Visciglia.
\newblock Decay and scattering of small solutions of pure power {NLS} in {$\Bbb
  R$} with {$p > 3$} and with a potential.
\newblock {\em Comm. Pure Appl. Math.}, 67(6):957--981, 2014.

\bibitem[CHVZ08]{CoHoViZh08}
James Colliander, Justin Holmer, Monica Visan, and Xiaoyi Zhang.
\newblock Global existence and scattering for rough solutions to generalized
  nonlinear {S}chr\"odinger equations on {$\Bbb R$}.
\newblock {\em Commun. Pure Appl. Anal.}, 7(3):467--489, 2008.

\bibitem[CK02]{christkiselev}
Michael Christ and Alexander Kiselev.
\newblock Scattering and wave operators for one-dimensional {S}chr\"odinger
  operators with slowly decaying nonsmooth potentials.
\newblock {\em Geom. Funct. Anal.}, 12(6):1174--1234, 2002.

\bibitem[CW92]{CaWe92}
Thierry Cazenave and Fred~B. Weissler.
\newblock Rapidly decaying solutions of the nonlinear {S}chr\"odinger equation.
\newblock {\em Comm. Math. Phys.}, 147(1):75--100, 1992.

\bibitem[DF06]{danconafanelli}
Piero D'Ancona and Luca Fanelli.
\newblock {$L^p$}-boundedness of the wave operator for the one dimensional
  {S}chr\"odinger operator.
\newblock {\em Comm. Math. Phys.}, 268(2):415--438, 2006.

\bibitem[DH09]{DaHo09}
Kiril Datchev and Justin Holmer.
\newblock Fast soliton scattering by attractive delta impurities.
\newblock {\em Comm. Partial Differential Equations}, 34(7-9):1074--1113, 2009.

\bibitem[DHR08]{DuHoRo08}
Thomas Duyckaerts, Justin Holmer, and Svetlana Roudenko.
\newblock Scattering for the non-radial 3{D} cubic nonlinear {S}chr\"odinger
  equation.
\newblock {\em Math. Res. Lett.}, 15(6):1233--1250, 2008.

\bibitem[DP11]{DePe11}
Percy Deift and Jungwoon Park.
\newblock Long-time asymptotics for solutions of the {NLS} equation with a
  delta potential and even initial data.
\newblock {\em Int. Math. Res. Not. IMRN}, (24):5505--5624, 2011.

\bibitem[FOO08]{FuOhOz08}
Reika Fukuizumi, Masahito Ohta, and Tohru Ozawa.
\newblock Nonlinear {S}chr\"odinger equation with a point defect.
\newblock {\em Ann. Inst. H. Poincar\'e Anal. Non Lin\'eaire}, 25(5):837--845,
  2008.

\bibitem[Fos05]{Fo05}
Damiano Foschi.
\newblock Inhomogeneous {S}trichartz estimates.
\newblock {\em J. Hyperbolic Differ. Equ.}, 2(1):1--24, 2005.

\bibitem[FXC11]{FaXiCa11}
DaoYuan Fang, Jian Xie, and Thierry Cazenave.
\newblock Scattering for the focusing energy-subcritical nonlinear
  {S}chr\"odinger equation.
\newblock {\em Sci. China Math.}, 54(10):2037--2062, 2011.

\bibitem[GHW04]{GoHoWe04}
Roy~H. Goodman, Philip~J. Holmes, and Michael~I. Weinstein.
\newblock Strong {NLS} soliton-defect interactions.
\newblock {\em Phys. D}, 192(3-4):215--248, 2004.

\bibitem[GHW15]{germainhaniwalsh}
Pierre Germain, Zaher Hani, and Samuel Walsh.
\newblock Nonlinear resonances with a potential: Multilinear estimates and an
  application to {NLS}.
\newblock {\em Int. Math. Res. Not.}, to appear, 2015.

\bibitem[GS86]{GaSc86}
Bernard Gaveau and Lawrence~S. Schulman.
\newblock Explicit time-dependent {S}chr\"odinger propagators.
\newblock {\em J. Phys. A}, 19(10):1833--1846, 1986.

\bibitem[GS04]{GoSc04}
Michael Goldberg and Wilhelm Schlag.
\newblock Dispersive estimates for {S}chr\"odinger operators in dimensions one
  and three.
\newblock {\em Comm. Math. Phys.}, 251(1):157--178, 2004.

\bibitem[HMZ07a]{HoMaZw07bis}
Justin Holmer, Jeremy Marzuola, and Maciej Zworski.
\newblock Fast soliton scattering by delta impurities.
\newblock {\em Comm. Math. Phys.}, 274(1):187--216, 2007.

\bibitem[HMZ07b]{HoMaZw07}
Justin Holmer, Jeremy Marzuola, and Maciej Zworski.
\newblock Soliton splitting by external delta potentials.
\newblock {\em J. Nonlinear Sci.}, 17(4):349--367, 2007.

\bibitem[Hon14]{Ho14}
Younghun Hong.
\newblock Scattering for a nonlinear {S}chr\"odinger equation with a potential.
\newblock {\em arXiv:1403.3944}, 2014.

\bibitem[HZ07]{HoZw07}
Justin Holmer and Maciej Zworski.
\newblock Slow soliton interaction with delta impurities.
\newblock {\em J. Mod. Dyn.}, 1(4):689--718, 2007.

\bibitem[HZ09]{HoZw09}
Justin Holmer and Maciej Zworski.
\newblock Breathing patterns in nonlinear relaxation.
\newblock {\em Nonlinearity}, 22(6):1259--1301, 2009.

\bibitem[KM06]{KeMe06}
Carlos~E. Kenig and Frank Merle.
\newblock Global well-posedness, scattering and blow-up for the
  energy-critical, focusing, non-linear {S}chr\"odinger equation in the radial
  case.
\newblock {\em Invent. Math.}, 166(3):645--675, 2006.

\bibitem[LCFF{\etalchar{+}}08]{LeFuFiKsSi08}
Stefan Le~Coz, Reika Fukuizumi, Gadi Fibich, Baruch Ksherim, and Yonatan Sivan.
\newblock Instability of bound states of a nonlinear {S}chr\"odinger equation
  with a {D}irac potential.
\newblock {\em Phys. D}, 237(8):1103--1128, 2008.

\bibitem[Nak99]{Na99}
Kenji Nakanishi.
\newblock Energy scattering for nonlinear {K}lein-{G}ordon and {S}chr\"odinger
  equations in spatial dimensions {$1$} and {$2$}.
\newblock {\em J. Funct. Anal.}, 169(1):201--225, 1999.

\bibitem[PV09]{PlVe09}
Fabrice Planchon and Luis Vega.
\newblock Bilinear virial identities and applications.
\newblock {\em Ann. Sci. \'Ec. Norm. Sup\'er. (4)}, 42(2):261--290, 2009.

\bibitem[Seg14]{Se14}
Jun-Ichi Segata.
\newblock Final state problem for the cubic nonlinear {S}chr\"odinger equation
  with repulsive delta potential.
\newblock {\em arXiv:1402.5185}, 2014.

\bibitem[Wed99]{weder}
Ricardo Weder.
\newblock The {$W_{k,p}$}-continuity of the {S}chr\"odinger wave operators on
  the line.
\newblock {\em Comm. Math. Phys.}, 208(2):507--520, 1999.

\bibitem[Yaj95]{yajima}
Kenji Yajima.
\newblock The {$W^{k,p}$}-continuity of wave operators for {S}chr\"odinger
  operators.
\newblock {\em J. Math. Soc. Japan}, 47(3):551--581, 1995.

\end{thebibliography}

\end{document}